\documentclass{article}
\usepackage{graphicx} 
\usepackage{amsmath, amssymb,amsthm}
\usepackage{pgf,tikz,calc}
\usepackage{enumitem}
\usepackage[colorlinks=true, linkcolor=blue]{hyperref}
\usepackage[normalem]{ulem}
\usepackage{comment}
\newtheorem{theorem}{Theorem}
\newtheorem{proposition}[theorem]{Proposition}

\newtheorem{remark}[theorem]{Remark}
\newtheorem{observation}[theorem]{Observation}
\newtheorem{claim}{Claim}

\newtheorem{open}{Open problem}
\newtheorem{conjecture}[open]{Conjecture}
\usetikzlibrary{calc}

\usepackage{amsthm}
\theoremstyle{definition}
\newcommand{\smallqed}{{\tiny ($\Box$)}}
\newcommand{\mc}{\overline{\chi}_{\geqslant}}
\newcommand{\cg}{C_{\geqslant}}
\def\cp{\,\square\,}

\newcommand{\cP}{{\cal P}}

\newcommand{\cI}{{\cal I}}

\setlist[description]{style=nextline}

\usepackage[dvipsnames]{xcolor}
\DeclareMathOperator{\dist}{dist}

\date{}

\begin{document}

\title{Majority C-coloring in Cartesian products}
\author{
Csilla Bujt\'as $^{a,b,}$\thanks{Email: \texttt{csilla.bujtas@fmf.uni-lj.si}}
\and
Magda Dettlaff $^{c,}$\thanks{Email: \texttt{magda.dettlaff@ug.edu.pl}}
\and
Hanna Furma\'nczyk $^{c,}$\thanks{Email: \texttt{hanna.furmanczyk@ug.edu.pl}}
\and
Aleksandra Laskowska $^{c,}$\thanks{Email: \texttt{aleksandra.laskowska@ug.edu.pl}}
}
\date{}
\maketitle

\begin{center}
$^a$ Faculty of Mathematics and Physics, University of Ljubljana, Slovenia\\
\medskip

$^b$ Institute of Mathematics, Physics and Mechanics, Ljubljana, Slovenia\\
\medskip

$^c$ Faculty of Mathematics, Physics and Informatics, University of Gda\'nsk, Poland\\
\medskip
\end{center}
\maketitle

\begin{abstract}
A majority C-coloring of a graph $G$ assigns colors to the vertices such that every vertex shares its color with at least half of its neighbors. The maximum number of colors that can be used in such a coloring of $G$ is denoted by  $\overline{\chi}_{\geqslant}(G)$. In this paper, the focus is on the majority C-coloring in Cartesian product graphs. It is shown that $\overline{\chi}_{\geqslant}(G \cp H) \ge \overline{\chi}_{\geqslant}(G) \overline{\chi}_{\geqslant}(H)$ gives a sharp lower bound, but the difference also can be arbitrarily large. For two-dimensional Hamming graphs, the exact value  $\overline{\chi}_{\geqslant}(K_m \cp K_n) = \min\{m,n\}$ is established. Balanced Hamming graphs of higher dimension, that is the $k$th powers of complete graphs with respect to the Cartesian product, are also studied. It is proved that $\overline{\chi}_{\geqslant}(K_n^{\cp, k})= n^{k/2}$ holds for every even integer $k$. If $k$ is odd and the Hamming graph is the $k$-dimensional hypercube, then $\overline{\chi}_{\geqslant}(K_2^{\cp, k})= 2^{\lfloor k/2\rfloor}$. On the other hand, a majority C-coloring of $K_n^{\cp, k}$ with at least $3 n^{\lfloor k/2\rfloor}/2 $ colors is presented for every $n \ge 7$ and odd $k \ge 3$. For Cartesian grids,  the main result shows that $\overline{\chi}_{\geqslant}(P_m \cp P_n) = 1 + \lfloor m/2\rfloor \lfloor n/2\rfloor$ if at least one of $m$ and $n$ is odd, while $\overline{\chi}_{\geqslant}(P_m \cp P_n)=mn/4$ holds if both parameters are even and $m \ge n \ge 4$. The paper concludes with a conjecture and several open problems.
\end{abstract}
\medskip

\noindent
{\bf Keywords:} majority C-coloring; Cartesian product; Hamming graph; balanced Hamming graph; Cartesian grid.
\medskip

\noindent
{\bf AMS Subj.\ Class.\ (2020):}  05C15, 05C76

\maketitle

\section{Introduction}
\label{sec:pre}
A majority C-coloring, as introduced in~\cite{majority-1}, is a vertex coloring of a graph $G$ so that every vertex shares its color with at least half of its neighbors. The majority C-chromatic number $\mc(G)$ is the maximum number of colors that can be used in such a coloring of $G$. In this paper, we focus on an essential graph operation and study the majority C-chromatic number of Cartesian products, with particular attention to Hamming graphs and Cartesian grids.

Hamming graphs are among the most extensively studied Cartesian product graphs. Their rich combinatorial structure and high symmetry have made them a central object of research in graph theory, coding theory, and interconnection networks.

Since grids constitute fundamental class of structured graphs, they provide a natural starting point for investigating the behavior of the proposed coloring model.

\subsection{Standard definitions}
We consider only simple undirected graphs. For a graph $G$, if not indicated differently, we refer to its vertex set and edge set as $V(G)$ and $E(G)$, respectively. The \emph{open neighborhood} of a vertex $v \in V(G)$, denoted by $N_G(v)$, is the set of the neighbors of $v$, while its \emph{closed neighborhood} is $N_G[v]=N_G(v) \cup \{v\}$. As usual, $\deg_G(v)$ is the degree of a vertex $v \in V(G)$, while $\delta(G)$, $\Delta(G)$ stand for the minimum and maximum vertex degree in $G$, respectively. Two vertices $u$ and $v$ are called \emph{true twins} in a graph $G$ if $N_G[u]=N_G[v]$. We note that $P_n$ and $C_n$ denote, respectively, the path graph and the cycle graph on $n$ vertices. For a positive integer $k$, the symbol $[k]$ stands for the set $\{1,2, \dots, k\}$.  
\medskip

For two graphs $G$ and $H$, the \emph{Cartesian product} $G \cp H$ is defined on the vertex set $V(G) \times V(H)$ so that two vertices $(g,h)$ and $(g', h')$ are adjacent if $g=g'$ and $hh' \in E(H)$, or $h=h'$ and $gg' \in E(G)$. For a vertex $g \in V(G)$, the set of vertices $^gH=\{(g,y)\colon y\in V(H)\}$ is an \emph{$H$-layer} in $G \cp H$. We use the same notation $^gH$ to refer to the subgraph induced by this layer. By definition, $^gH \cong H$ for every $g \in V(G) $. The \emph{$G$-layer} $G^h$ is defined analogously for every fixed $h \in V(H)$.

In general, a \emph{Hamming graph} $F$ is the Cartesian product of some complete graphs. If $F$ is the product of $k$ factors, that is $F=K_{m_1} \cp K_{m_2} \cp \cdots \cp K_{m_k}$, then it is a Hamming graph of \emph{dimension $k$}. If  all the $k$ factors are isomorphic to $K_n$, the resulting graph is the \emph{balanced Hamming graph} $K_n^{\cp, k}$ which is also called the \emph{$k^{\rm th}$ power} of the graph $K_n$ (with respect to the Cartesian product). 
Then the $k$-dimensional hypercube $Q_k$ is the balanced Hamming graph $K_2^{\cp, k}$. We also note that $K_n^{\cp, k}$ is a $k(n-1)$-regular graph of order $n^k$ and diameter $k$.

A \emph{Cartesian grid, cylinder} and \emph{torus} is the Cartesian product of two paths, a path and a cycle, and two cycles, respectively. We say that a vertex $v \in V(G)$ is a \emph{corner vertex}, a \emph{border vertex}, or an \emph{inner vertex} of the grid $G=P_m \cp P_n$ if $\deg_G(v)$ equals $2$, $3$, or $4$, respectively. Two corner vertices are \emph{opposite} if their distance is $m+n-2$.

\medskip

For additional graph theoretic definitions and facts related to graph products, we refer the reader to the books~\cite{West-book} and~\cite{Product-book}.

\subsection{Majority C-coloring}
For a graph $G$, a surjective mapping $\varphi \colon V(G) \rightarrow [p]$  is a \emph{majority C-coloring} (or shortly, \emph{$\cg$-coloring}) of $G$ with color classes $V_1=\varphi^{-1}(1), \dots, V_p=\varphi^{-1}(p)$, if every vertex $v \in V(G)$ satisfies the following \emph{majority condition}: 
\begin{equation} \label{eq:majority-1}
  v \in V_i \quad \Rightarrow \quad | N_G(v) \cap V_i| \ge \frac{1}{2} \deg_G(v).
\end{equation} 
The maximum number of colors that can be used in a $\cg$-coloring of $G$ is called the \emph{majority C-chromatic number $\mc(G)$} (or shortly, \emph{$\cg$-chromatic number})
 of the graph. 

When a $\cg$-coloring $\varphi$ is given, we will refer to its color classes as $V_1, \dots , V_p$, and assume that neither of them is empty. Further, we say that $\varphi$ is a \emph{$\mc$-coloring} of $G$ if it uses exactly $\mc(G)$ colors. For illustration see Fig.~\ref{fig:2-dim-Ham}.
 
\medskip

The following properties follow directly from the definition of $\cg$-coloring and $\cg$-chromatic number. They were first observed in~\cite{majority-1}.
\begin{observation} \label{obs:basic} Let $\varphi$ be a majority C-coloring of the graph $G$.
\begin{enumerate}[label=\normalfont(\roman*)]
        \item If $\varphi$ is a $\mc$-coloring of a graph $G$, then every color class induces a connected subgraph in $G$.
    \item For the complete graph $K_n$, $n\geq 1$, it holds that $\mc(K_n)=1$.    
    \item For the cycle $C_n$ and the path $P_n$, $n\geq 3$, it holds that $\mc(C_n)=\mc(P_n)=\lfloor \frac{n}{2}\rfloor$.
\end{enumerate}   
\end{observation}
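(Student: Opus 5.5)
We treat the three parts of Observation~\ref{obs:basic} separately, since each follows from the majority condition~(\ref{eq:majority-1}) together with a short counting or merging argument.

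\emph{Part (i).} Let $\varphi$ be a $\mc$-coloring and suppose some class $V_i$ induces a disconnected subgraph. Split $V_i$ into the vertex set $A$ of one component of $G[V_i]$ and $B=V_i\setminus A\neq\emptyset$, and give $B$ a new color $p+1$. A vertex $v\in A$ has no neighbors in $B$, so $N_G(v)\cap A=N_G(v)\cap V_i$ and its same-colored neighbors are unchanged; the same holds for $v\in B$. Vertices outside $V_i$ are unaffected. Hence the new coloring is a $\cg$-coloring with $p+1$ colors, contradicting maximality. This part is routine.

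\emph{Part (ii).} The constant coloring of $K_n$ is valid, so $\mc(K_n)\ge 1$. Conversely, suppose some class $V_i$ has $|V_i|=s<n$ and $n\ge 2$. A vertex of $V_i$ has $n-1$ neighbors, of which $s-1$ share its color, so the majority condition gives $s-1\ge (n-1)/2$, i.e.\ $s\ge (n+1)/2>n/2$. Every class therefore contains more than half of the vertices, so two classes would intersect, which is impossible. Thus only one color is used. For $n=1$ the claim is trivial.

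\emph{Part (iii).} Here the degrees are at most $2$. A vertex of degree $2$ needs at least one same-colored neighbor, and a vertex of degree $1$ needs its unique neighbor to share its color. So each class has at least $2$ vertices, which gives $\mc\le\lfloor n/2\rfloor$ for both $C_n$ and $P_n$. For the lower bound, cut the path or cycle into $\lfloor n/2\rfloor$ consecutive segments, each of $2$ vertices except one of $3$ vertices when $n$ is odd, and color each segment with its own color. Every vertex then has a same-colored neighbor inside its segment, and for $P_n$ the two endpoints lie in segments with their unique neighbors. The only point needing care is exactly this endpoint check for paths; otherwise everything is direct, and the main step is the pigeonhole argument in (ii).
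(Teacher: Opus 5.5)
Your proof is correct in all three parts. The paper itself gives no proof of this observation: it only remarks that the properties follow directly from the definitions and attributes them to the introductory paper on majority C-colorings. Your arguments supply exactly those direct verifications: splitting a disconnected class into a new color for (i), the bound $|V_i|\ge (n+1)/2$ forcing a single class in $K_n$ for (ii), and for (iii) the fact that every class has at least two vertices together with the consecutive-segment construction.
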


\subsection{Motivation}
The definition of the majority C-coloring combines the coloring conditions of the \emph{majority colorings} of graphs \cite{Anholcer, Bock, Kalinowski, KeOuSeZyWo-17} with the idea behind the \emph{C-colorings} of hypergraphs~\cite{Bujtas-2009, BuTu-2010, Jiang-2002, Vol-1995, Vol-2002}. For definitions and a short summary, we refer the reader to the introductory paper~\cite{majority-1}. 

Another coloring model related to the $\cg$-coloring is the \emph{$k$-improper C-coloring} of graphs~\cite{BuSaTuPuVa-10}. If $k$ is a fixed positive integer, then it is a vertex coloring of a graph $G$ so that every vertex $v \in V(G)$ shares its color with all but at most $k$ of its neighbors. The maximum number of colors in such a coloring of $G$ is denoted by $\overline{\chi}_{k \mbox{\footnotesize{-imp}}}(G)$. By this definition, the majority C-colorings and the $\lfloor d/2 \rfloor$-improper C-colorings coincide over the class of $d$-regular graphs. Therefore, if $G$ is $d$-regular, then 
\begin{equation} \label{eq:k-improper}
    \mc(G)=\overline{\chi}_{\lfloor \frac{d}{2}\rfloor \mbox{\footnotesize{-imp}}}(G).
\end{equation}
 
A $\cg$-coloring of a graph $G$ with two colors defines a partition of $V(G)$ into two color classes $V_1$ and $V_2$ such that $|N(v_1) \cap V_1| \ge |N(v_1) \cap V_2|$ holds for every $v_1 \in V_1$, and  $|N(v_2) \cap V_2| \ge |N(v_2) \cap V_1|$ holds if $v_2 \in V_2$. In the earlier literature, these partitions were referred to as \emph{satisfactory partitions}~\cite{BTV-06, GeKob-2004, Shaf} or \emph{internal partitions}~\cite{Anastos, Ban}. However, those studies focused only on partitions into two (or three) classes and mainly considered them from an algorithmic perspective. In contrast, we are interested in the coloring aspect of this problem and concentrate on the maximum number of classes in such partitions. 

As for applications of the majority C-coloring model, the introductory paper~\cite{majority-1} discussed connections to public service allocation problems and Schelling's model of segregation. Further possible applications were pointed out in the works~\cite{Anastos, Ban, BTV-06} in terms of internal (or satisfactory) partitions. 
\medskip

The introductory paper~\cite{majority-1} explored the behavior of the majority C-chromatic number over some basic graph classes like trees, cubic graphs, powers of paths and cycles. It also investigated various interrelationships among the order, size, classical chromatic number, and $\cg$-chromatic number of a graph. It was pointed out that $\mc(G)$ is not monotone under taking (induced) subgraphs. In fact, for an edge $e$ of $G$, both the lower and upper bounds in
$$\mc(G)-2 \leq \mc(G-e) \leq \mc(G)+1
$$
are sharp. The present work naturally extends this line of research by exploring the behavior of the majority C-chromatic number in Cartesian products of graphs.

\subsection{Results and structure of the paper}
In Section~\ref{sec:Cart}, we first prove two general lower bounds on $\mc(G \cp H)$. The subsequent results of the paper provide infinitely many examples of their sharpness. Then, we turn our attention to Hamming graphs by establishing that
$$\mc(K_m \cp K_n) = \min\{m,n\}$$ holds for every two-dimensional Hamming graph. For Hamming graphs of higher dimension, the problem seems much more difficult, but we can prove the exact value 
 $$\mc (K_n^{\cp, k})= n^{\frac{k}{2}}$$
 for the balanced Hamming graphs of dimension $k$, if the exponent is even. For an odd $k$, the  inequality
 \begin{equation} \label{eq:odd-exponent}
     \mc (K_n^{\cp, k})\ge  n^{\lfloor\frac{k}{2} \rfloor}
 \end{equation} 
  is always valid, and it holds with equality in the case of a hypercube that is, when $n=2$. On the other hand, if $k\ge 3$ is odd and $n \ge 7$ or $n=5$, a $\cg$-coloring of $K_n^{\cp, k}$ that uses at least $3 n^{\lfloor k/2\rfloor}/2 $ colors is presented, thereby improving the lower bound in~\eqref{eq:odd-exponent}.
  \medskip

  Section~\ref{sec:grids} is devoted to the study of $\cg$-colorings of Cartesian grids. If both $m$ and $n$ are even (and large enough), the formula
  \begin{equation*}
      \mc(P_m \cp P_n)= \Bigl\lfloor\frac{m}{2}\Bigr\rfloor\Bigl\lfloor\frac{n}{2}\Bigr\rfloor
  \end{equation*}
  is not hard to prove. However, when at least one of $m$ and $n$ is odd, the proof of the formula
  \begin{equation*}
      \mc(P_m \cp P_n)= 1+\Bigl\lfloor\frac{m}{2}\Bigr\rfloor\Bigl\lfloor\frac{n}{2}\Bigr\rfloor
  \end{equation*}
  needs a much more intricate approach.
  Along the way, in the same section, the $\cg$-chromatic numbers of cylinders $C_m \cp P_n$ and toruses $C_m \cp C_n$ are determined under some parity conditions.
  \medskip

  In Section~\ref{sec:concluding}, we pose a conjecture and provide a short overview of the open problems related to our work.
  \medskip

  We conclude this section with a statement that will be referred to later in the paper. Recall that, by definition, true twin vertices are always adjacent.
\begin{proposition} \label{prop:twins}
    If $x_1$ and $x_2$ are true twins in a graph $G$, then every $\cg$-coloring of $G$ assigns the same color to $x_1$ and $x_2$.
\end{proposition}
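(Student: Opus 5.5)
We argue by contradiction. Suppose $\varphi$ is a $\cg$-coloring of $G$ with $\varphi(x_1)=1$ and $\varphi(x_2)=2$. Since $x_1$ and $x_2$ are true twins, they are adjacent and $N_G(x_1)\setminus\{x_2\} = N_G(x_2)\setminus\{x_1\}=:S$. Put $d=\deg_G(x_1)=\deg_G(x_2)=|S|+1$. Let $a=|S\cap V_1|$ and $b=|S\cap V_2|$; as $V_1$ and $V_2$ are disjoint, $a+b\le |S| = d-1$.

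The key step is to compare the two majority conditions. The neighbor $x_2$ of $x_1$ lies in $V_2$, and the neighbor $x_1$ of $x_2$ lies in $V_1$. Hence
\[
|N_G(x_1)\cap V_1| = a, \qquad |N_G(x_2)\cap V_2| = b .
\]
Condition~\eqref{eq:majority-1} at $x_1$ gives $a\ge d/2$, and at $x_2$ it gives $b\ge d/2$. Adding these yields $a+b\ge d$, which contradicts $a+b\le d-1$.

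So $\varphi(x_1)=\varphi(x_2)$ for every $\cg$-coloring $\varphi$. Nothing here is a real obstacle. The only point needing care is that each twin counts the other as a neighbor of the wrong color, and this is exactly what forces the deficit. The degenerate case $S=\emptyset$ (so $d=1$) is covered by the same inequalities: $a\ge 1/2$ is already impossible when $a=0$.
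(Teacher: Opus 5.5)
Your proof is correct and is essentially the paper's argument. Both set $S=N_G(x_1)\setminus\{x_2\}$, observe that each twin sees the other as a neighbor of the wrong color, and add the two majority conditions to get $|S|\ge |S|+1$.
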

\begin{proof}
Let $V_1, \dots , V_t$ be the color classes for a $\cg$-coloring of $G$ and consider the true twins $x_1$, $x_2$. Set $Y=N_G(x_1) \setminus \{x_2\} = N_G(x_2) \setminus \{x_1\} $ and let $y=|Y|$. Observe that $\deg_G(x_s)=y+1$ for $s \in \{1,2\}$. Suppose for a contradiction that $x_1 \in V_i$, $x_2 \in V_j$ and $i \neq j$. The majority condition~\eqref{eq:majority-1}, applied to $x_1$ gives
$$|N_G(x_1) \cap V_i| = |Y \cap V_i| \ge \frac{1}{2} (y+1),
$$
and the analogous inequality $|Y \cap V_j| \ge \frac{1}{2} (y+1)$ is obtained if we apply~\eqref{eq:majority-1} to $x_2$. We may therefore conclude the contradiction
$$ y= |Y| \ge |Y \cap V_i| + |Y \cap V_j| \ge y+1.
$$
Consequently, $x_1$ and $x_2$ belong to the same color class.
\end{proof}

\section{Cartesian products and Hamming graphs}
\label{sec:Cart}
We begin studying $\cg$-colorings of Cartesian products by establishing general lower bounds on $\mc(G\cp H)$ and determining the exact value of $\mc(K_m \cp K_n)$ for two-dimensional Hamming graphs. We also derive exact values for the (Cartesian) powers of complete graphs if the exponent is even, and give estimates when it is odd.
\subsection{General lower bounds}
\label{subsec:cartesian-lower}
\begin{proposition} \label{prop:cart}
 If $G$ and $H$ are two graphs and $n_H$ denotes the order of $H$, the following statements hold.
  \begin{itemize} 
  \item[$(i)$] $\mc(G\cp H) \ge \mc(G)\, \mc(H)$.
  \item[$(ii)$] If $\delta(G) \ge \Delta(H)$, then $\mc(G\cp H) \ge n_H$.
    \end{itemize}
  \begin{proof}
  (i) We set $\mc(G)= t$ and $\mc(H)= s$. Let $V_1, \dots , V_t$ be the color classes of a $\mc$-coloring of $G$ and, similarly, let $U_1, \dots U_s$ be the color classes of a $\mc$-coloring of $H$. By definition, $|N_G(v) \cap V_i|\ge \frac{1}{2}\deg_G(v)$ holds for every $v \in V_i$, and $|N_H(u) \cap U_j|\ge \frac{1}{2}\deg_H(u)$ holds for every $u \in U_j$. In the Cartesian product $F=G\cp H$, consider the vertex set $V_i \times U_j$ for every $(i,j) \in [t]\times [s]$. These $ts$ sets together define a partition $\cP$ of $V(F)$. We prove that the corresponding coloring $\varphi$ is a $\cg$-coloring of $F$.

  Consider an arbitrary vertex $(v,u) \in V(F)$. Recall that $\deg_F(v,u)= \deg_G(v) + \deg_H(u)$. If $(v,u) \in V_i \times U_j$, then $v \in V_i$ and $|N_G(v) \cap V_i|\ge \frac{1}{2}\deg_G(v)$. Consequently, for the product $F$, the set $V_i \times U_j$ contains at least $\frac{1}{2}\deg_G(v)$ neighbors of $(v,u)$ of the form $(v,y)$. By symmetry, $V_i \times U_j$ contains at least $\frac{1}{2}\deg_H(u)$ neighbors of $(v,u)$ of the form $(x,u)$. Therefore, we have
  $$ |N_F((v,u)) \cap (V_i \times U_j)| \ge \frac{\deg_G(v)}{2} + \frac{\deg_H(u)}{2} = \frac{\deg_F((v,u))}{2}.
  $$
  Since the above argumentation is valid for every vertex from the product, $\cP$ defines a $\cg$-coloring with $ts= \mc(G)\, \mc(H)$ (non-empty) color classes and $\mc(G\cp H) \ge \mc(G)\, \mc(H)$ follows.  
  \medskip

  (ii) If $\delta(G) \ge \Delta(H)$, then every vertex $(v,u)$ of $F=G \cp H$ satisfies $$\deg_F((v,u)) = \deg_G(v)+ \deg_H(u) \leq 2 \deg_G(v).$$ Partition $V(F)$ into the $n_H$ classes $V(G) \times \{y\}$, one for each vertex $y\in V(H)$. Let $(v,u) \in V(F)$. Then $(v,u)$ belongs to the  class $V(G) \times \{u\}$ and 
  $$|N_F((v,u)) \cap (V(G) \times \{u\}) | \ge \deg_G(v) \ge \frac{\deg_F((v,u))}{2}.
  $$ 
  Since it holds for every vertex from $F$, the partition defined for $V(F)$ gives a $\cg$-coloring with $n_H$ colors. 
    \end{proof}
\end{proposition}
The remaining results of this (and the next) section demonstrate that the lower bounds in Proposition~\ref{prop:cart}(i) and (ii) are sharp for an infinite family of graphs. It will be discussed in more detail at the end of the section in  Remark~\ref{rmk:cart-tightness}.

\subsection{Two-dimensional Hamming graphs}
\label{subsec:2-dim-hamming}

A one-dimensional Hamming graph corresponds to a complete graph $K_m$ and, by Observation~\ref{obs:basic}(ii), $\mc(K_m)=1$ for every positive integer $m$. We now consider two-dimensional Hamming graphs (see Figure~\ref{fig:2-dim-Ham} for an illustration), and prove the following exact value for $\mc(K_m \cp K_n)$.

\begin{figure}[t!] 
        \centering
\begin{tikzpicture}
[scale=0.7,style=thick,x=1cm,y=1cm
]
\def\vr{5pt}
\begin{scope}[xshift=-1cm, yshift=0cm] 
\coordinate(u1) at (0.0,0.0);
\coordinate(u2) at (0.0,-2.0);
\coordinate(u3) at (0.0,-4.0);
\coordinate(v1) at (3.0,0.0);
\coordinate(v2) at (3.0,-2.0);
\coordinate(v3) at (3.0,-4.0);
\coordinate(w1) at (6.0,0.0);
\coordinate(w2) at (6.0,-2.0);
\coordinate(w3) at (6.0,-4.0);
\coordinate(z1) at (9.0,0.0);
\coordinate(z2) at (9.0,-2.0);
\coordinate(z3) at (9.0,-4.0);

\foreach \i in {1,2,3} 
{
\draw(v\i) -- (u\i) -- (w\i)--(z\i);
\draw(v1) -- (v2) -- (v3) ;
\draw(u1) -- (u2) -- (u3) ;
\draw(w1) -- (w2) -- (w3) ;
\draw(z1) -- (z2) -- (z3) ;
\draw plot [smooth, tension=1] coordinates {(u1) (0.5,-2) (u3)};
\draw plot [smooth, tension=1] coordinates {(v1) (3.5,-2) (v3)};
\draw plot [smooth, tension=1] coordinates {(w1) (6.5,-2) (w3)};
\draw plot [smooth, tension=1] coordinates {(z1) (9.5,-2) (z3)};
}
\draw plot [smooth, tension=1] coordinates {(u1) (3,0.4) (w1)};
\draw plot [smooth, tension=1] coordinates {(v1) (6,0.4) (z1)};
\draw plot [smooth, tension=1] coordinates {(v2) (6,-1.6) (z2)};
\draw plot [smooth, tension=1] coordinates {(v3) (6,-3.6) (z3)};
\draw plot [smooth, tension=1] coordinates {(u2) (3,-1.6) (w2)};
\draw plot [smooth, tension=1] coordinates {(u3) (3,-3.6) (w3)};
\draw plot [smooth, tension=1] coordinates {(u1) (4.5,0.8) (z1)};
\draw plot [smooth, tension=1] coordinates {(u2) (4.5,-1.2) (z2)};
\draw plot [smooth, tension=1] coordinates {(u3) (4.5,-3.2) (z3)};

\begin{scriptsize}
\foreach \i in {u,v,w,z} 
{
\draw(\i1)[fill=white] circle(\vr);
}
\foreach \i in {u,v,w,z} 
{
\draw(\i2)[fill=gray] circle(\vr);
}
\foreach \i in {u,v,w,z} 
{
\draw(\i3)[fill=black] circle(\vr);
}
\end{scriptsize}

\draw (-0.5, -0.4) node {$^{(1,3)}$};
\draw (-0.5, -2.4) node {$^{(1,2)}$};
\draw (-0.5, -4.4) node {$^{(1,1)}$};
\draw (2.5, -0.4) node {$^{(2,3)}$};
\draw (2.5, -2.4) node {$^{(2,2)}$};
\draw (2.5, -4.4) node {$^{(2,1)}$};
\draw (5.5, -0.4) node {$^{(3,3)}$};
\draw (5.5, -2.4) node {$^{(3,2)}$};
\draw (5.5, -4.4) node {$^{(3,1)}$};
\draw (8.5, -0.4) node {$^{(4,3)}$};
\draw (8.5, -2.4) node {$^{(4,2)}$};
\draw (8.5, -4.4) node {$^{(4,1)}$};

\end{scope}

\end{tikzpicture}
        \caption{A $\mc$-coloring of the two-dimensional Hamming graph $K_4 \cp K_3$ over the vertex set $[4]\times [3]$ that uses colors white, gray, and black.}
        \label{fig:2-dim-Ham}
\end{figure}
\begin{theorem} \label{thm:2-dim-Ham}
  For every two positive integers $m$ and $n$,  it holds that $$\mc(K_m \cp K_n) = \min\{m,n\}.$$ 
\end{theorem}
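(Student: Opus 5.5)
For the lower bound, assume by symmetry that $m \ge n$. When $n=1$ the graph is $K_m$, so the value $1$ comes from Observation~\ref{obs:basic}(ii). When $n\ge 2$, we have $\delta(K_m)=m-1\ge n-1=\Delta(K_n)$. Proposition~\ref{prop:cart}(ii), applied with $G=K_m$ and $H=K_n$, then gives $\mc(K_m\cp K_n)\ge n=\min\{m,n\}$. The resulting coloring is the one in Fig.~\ref{fig:2-dim-Ham}, where each $K_m$-layer is one color class.

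For the upper bound, I plan to show that every color class $S$ of a $\cg$-coloring of $K_m \cp K_n$ has at least $m$ vertices. Since there are $mn$ vertices, this leaves room for at most $n$ classes. Index vertices by $(a,b)\in[m]\times[n]$. For $S$, let $R_b$ be the number of vertices of $S$ in row $b$ (the $K_m$-layer $K_m^b$), and $C_a$ the number in column $a$ (the $K_n$-layer ${}^aK_n$). Every vertex has degree $m+n-2$, so the majority condition at $(a,b)\in S$ reads
$$R_b + C_a \ge s, \qquad \text{where } s=\frac{m+n+2}{2}.$$

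The counting step is as follows. Choose a row $b$ in which $S$ is largest, and put $x=R_b\le m$. Each of the $x$ vertices of $S$ in this row lies in its own column, and each of these columns contains at least $s-x$ vertices of $S$. Therefore $|S|\ge x\cdot\max\{1,s-x\}$. Since $C_a\le n$, we also get $x\ge s-n=(m-n+2)/2$. At the endpoint $x=s-n$, the bound is $n(m-n+2)/2$, and
$$\frac{n(m-n+2)}{2}-m=\frac{(n-2)(m-n)}{2}\ge 0,$$
so this case is fine. When $x\ge m$, the row itself already supplies $m$ vertices.

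I expect the main obstacle to be the middle range, roughly $x\approx s-1=(m+n)/2$ with $m>n$. There the crude estimate gives only about $(m+n)/2$ vertices. I would first refine the count at a second level. Besides row $b$, the class must fill the columns through the $x$ chosen vertices. Each vertex of $S$ in those columns but outside row $b$ has a row count of at least $s-C_a\ge s-n$. Let $y$ be the largest column count of $S$. Then there are $y-1\ge s-x-1$ extra rows, each containing at least $s-y$ vertices of $S$. Adding these to the $x$ vertices in row $b$ should give $|S|\ge x+(s-x-1)(s-y)$. Combined with the symmetric bound obtained from a largest column, this ought to reach $m$ in all cases.

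If this inequality juggling fails at the boundary cases, I would fall back on a different argument. It suffices to show that every class meets every one of the $n$ rows, or else has a structure that forces $|S|\ge m$. For that I would argue from a class missing some row and use the fact that each of its vertices needs at least $s-n$ row-mates. Either way, the conclusion $|S|\ge m$ gives $\mc(K_m\cp K_n)\le mn/m=n$, which completes the theorem.
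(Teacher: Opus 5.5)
Your lower bound is fine, but the upper bound has a genuine gap, and the route you sketch cannot close it. The refined estimate and the fallback are only hoped for (``should give'', ``ought to reach''). More seriously, every inequality you use comes from the majority condition at vertices of $S$ only, that is, from $\delta(G[S])\ge (m+n-2)/2$. This covers $R_b+C_a\ge s$, $|S|\ge x\max\{1,s-x\}$, $|S|\ge x+(s-x-1)(s-y)$, its column analogue, and the ``$s-n$ row-mates'' fallback. That hypothesis does not imply $|S|\ge m$. A set $S$ of $\lceil (m+n)/2\rceil$ vertices inside a single $K_m$-layer satisfies it, yet $|S|\le m-1$ whenever $m\ge n+2$. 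This is exactly your ``middle range'' ($x=|S|\ge s-1$, $y=1$), and it is a real obstruction rather than an artifact of crude estimates.

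The missing idea is to apply the majority condition to a vertex \emph{outside} $S$. Fix $(a,b)\in S$.

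If $C_a=1$, then $R_b\ge s-1=(m+n)/2$. Any vertex of row $b$ not in $S$ is therefore adjacent to at least $(m+n)/2$ vertices of $S$. That leaves it at most $(m+n-4)/2<(m+n-2)/2$ neighbors in its own class, a contradiction. Hence all of row $b$ lies in $S$ and $|S|\ge m$.

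If $C_a\ge 2$, your own counting finishes the job. The $C_a$ vertices of $S$ in column $a$ lie in distinct rows, and each of these rows contains at least $s-C_a$ vertices of $S$. By concavity, $|S|\ge C_a(s-C_a)\ge \min\{2(s-2),\,n(s-n)\}=\min\{m+n-2,\,n(m-n+2)/2\}\ge m$ for $n\ge 2$.

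This two-case split is precisely the paper's proof. Its Case~1 (a column-mate $z_1$ of $z_0$ exists) is a count of this kind, giving $|S|\ge R_b+R_{b'}+C_a-2\ge m$. Its Case~2 (no column-mate) uses the outside-vertex argument to put the whole $K_m$-layer of $z_0$ into the class.
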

\begin{proof}
     By the commutativity of the Cartesian product, we may assume $m \ge n$. Then $\delta(K_m) = m-1 \ge \Delta(K_n)=n-1$, and Proposition~\ref{prop:cart} (ii) directly implies $\mc(K_m \cp K_n) \ge  n$.

     Let $G=K_m \cp K_n$ and $k = \mc(G)$. We consider a $\mc$-coloring of $G$ with color classes $V_1, \dots, V_k$ and want to prove that $|V_j| \ge m$ holds for each $j \in [k]$. The vertices of $G$ will be represented by two-dimensional vectors $(a,b) \in [m] \times [n]$. Two vertices are neighbors if they differ in exactly one coordinate, and $\deg_G(z)= m+n-2$ holds for every $z \in V(G)$. We also note that $G$ is a graph of diameter~$2$ whenever $n \ge 2$.

     Consider an arbitrary color class $V_j$ and a vertex $z_0=(a_0,b_0)$ with $z_0 \in V_j$. The neighborhood of $z_0$ can be partitioned into two classes 
     $$ Y= \{ (a_0,y) \colon y \in [n]\setminus \{b_0\}\} \quad \mbox{and} \quad 
     X= \{ (x,b_0) \colon x \in [m]\setminus \{a_0\}\}.
     $$
     Note that $|X|= m-1 $ and $|Y|=n-1$. By the majority condition~(\ref{eq:majority-1}),
     $$|V_j \cap (X \cup Y)| \ge \Bigl\lceil \frac{m+n-2}{2}\Bigr\rceil.$$
    \medskip
    
     We now distinguish two cases to show that $|V_j|\ge m$ holds for every color class.
     \begin{description}
         \item[Case 1. $V_j \cap Y \neq \emptyset$]
                Let $z_1$ be a neighbor of $z_0$ from $V_j \cap Y$. Since $z_1$ has only $n-2$ neighbors from $Y$ and no neighbor from $X$, we derive that $z_1$ has at least $\lceil \frac{1}{2}(m+n-2)\rceil -1-(n-2)$ neighbors in $V_j$ which are distance~$2$ apart from $z_0$. Then, $|V_j|$ can be estimated as follows.
          \begin{align*}
              |V_j| &\ge |\{z_0\}| + |V_j \cap (X\cup Y)| + \Bigl\lceil \frac{m+n-2}{2}\Bigr\rceil -1-(n-2)\\
              &\ge 1+ \Bigl\lceil \frac{m+n-2}{2}\Bigr\rceil + \Bigl\lceil \frac{m+n-2}{2}\Bigr\rceil -n +1\\
              &= 2 \Bigl\lceil \frac{m+n}{2}\Bigr\rceil -n \ge m.
          \end{align*}
             \item[Case 2. $V_j \cap Y = \emptyset$]
              Since $N(z_0)$ contains at least $\lceil \frac{1}{2}(m+n-2)\rceil$ vertices from $V_j$, we have $|X \cap V_j| \ge \lceil \frac{1}{2}(m+n-2)\rceil$ under the present assumption. Suppose, for a contradiction, that there exists a vertex $z' \in X \setminus V_j$. As $z_0$ is also from $V_j$ and $G[X]$ is a clique, vertex $z'$ has at least $\lceil \frac{1}{2}(m+n-2)\rceil +1$ neighbors in $V_j$.  Therefore,  more than half of the vertices in $N(z')$ are colored differently from $z'$, a contradiction. We infer that $X \cup \{z_0\} \subseteq V_j$, which itself proves $|V_j| \ge m$.
     \end{description}
      We could conclude $|V_j| \ge m$ in both cases. Then, since $G$ has $mn$ vertices, $\mc(G) \leq \frac{mn}{m}=n$ follows, which completes the proof of the theorem.     
\end{proof}

\subsection{Balanced Hamming graphs of higher dimension}
\label{subsec:hamming-power}
Recall that a balanced Hamming graph of dimension $k$ is the $k^{\rm th}$ (Cartesian) power $K_n^{\cp, k}=K_n \cp \dots \cp K_n$ for a positive integer $n$, where the product contains exactly $k$ factors. Balanced Hamming graphs are also well known for their use in cryptography and error-correcting codes. 

As $K_n^{\cp, 1} \cong K_1^{\cp, n} \cong K_n $, we already know that $\mc(K_n^{\cp, 1})= \mc(K_1^{\cp, n})=1$. Further, Theorem~\ref{thm:2-dim-Ham} implies that $\mc(K_n^{\cp, 2})=n$ holds for every positive integer $n$. In what follows, we prove exact values for $\mc (K_n^{\cp, k})$ for every $n \ge 2$ and $k \ge 2$ if $k$ is even, and present lower and upper bounds for the cases where $k$ is odd. 
\begin{figure}[htb]
\centering
\begin{tikzpicture}[
  edgeI/.style={black, solid, line width=0.75pt},
  edgeJ/.style={black, dashed, line width=0.75pt},
  edgeK/.style={black, dotted, line width=0.75pt},
  vtx/.style={circle, draw=black, fill=white, line width=0.75pt, minimum size=5.6pt, inner sep=0pt}, scale=0.8
]
\draw[edgeI] (0.000,0.000) -- (2.295,0.000);
\draw[edgeI] (2.295,0.000) -- (4.590,0.000);
\draw[edgeI] (0.000,0.000) .. controls (1.530,0.227) and (3.060,0.227) .. (4.590,0.000);
\draw[edgeI] (0.000,2.295) -- (2.295,2.295);
\draw[edgeI] (2.295,2.295) -- (4.590,2.295);
\draw[edgeI] (0.000,2.295) .. controls (1.530,2.522) and (3.060,2.522) .. (4.590,2.295);
\draw[edgeI] (0.000,4.590) -- (2.295,4.590);
\draw[edgeI] (2.295,4.590) -- (4.590,4.590);
\draw[edgeI] (0.000,4.590) .. controls (1.530,4.817) and (3.060,4.817) .. (4.590,4.590);
\draw[edgeJ] (0.000,0.000) -- (0.000,2.295);
\draw[edgeJ] (0.000,2.295) -- (0.000,4.590);
\draw[edgeJ] (0.000,0.000) .. controls (-0.227,1.530) and (-0.227,3.060) .. (0.000,4.590);
\draw[edgeJ] (2.295,0.000) -- (2.295,2.295);
\draw[edgeJ] (2.295,2.295) -- (2.295,4.590);
\draw[edgeJ] (2.295,0.000) .. controls (2.068,1.530) and (2.068,3.060) .. (2.295,4.590);
\draw[edgeJ] (4.590,0.000) -- (4.590,2.295);
\draw[edgeJ] (4.590,2.295) -- (4.590,4.590);
\draw[edgeJ] (4.590,0.000) .. controls (4.363,1.530) and (4.363,3.060) .. (4.590,4.590);
\draw[edgeI] (1.054,0.782) -- (3.349,0.782);
\draw[edgeI] (3.349,0.782) -- (5.644,0.782);
\draw[edgeI] (1.054,0.782) .. controls (2.584,1.009) and (4.114,1.009) .. (5.644,0.782);
\draw[edgeI] (1.054,3.077) -- (3.349,3.077);
\draw[edgeI] (3.349,3.077) -- (5.644,3.077);
\draw[edgeI] (1.054,3.077) .. controls (2.584,3.304) and (4.114,3.304) .. (5.644,3.077);
\draw[edgeI] (1.054,5.372) -- (3.349,5.372);
\draw[edgeI] (3.349,5.372) -- (5.644,5.372);
\draw[edgeI] (1.054,5.372) .. controls (2.584,5.599) and (4.114,5.599) .. (5.644,5.372);
\draw[edgeJ] (1.054,0.782) -- (1.054,3.077);
\draw[edgeJ] (1.054,3.077) -- (1.054,5.372);
\draw[edgeJ] (1.054,0.782) .. controls (0.827,2.312) and (0.827,3.842) .. (1.054,5.372);
\draw[edgeJ] (3.349,0.782) -- (3.349,3.077);
\draw[edgeJ] (3.349,3.077) -- (3.349,5.372);
\draw[edgeJ] (3.349,0.782) .. controls (3.122,2.312) and (3.122,3.842) .. (3.349,5.372);
\draw[edgeJ] (5.644,0.782) -- (5.644,3.077);
\draw[edgeJ] (5.644,3.077) -- (5.644,5.372);
\draw[edgeJ] (5.644,0.782) .. controls (5.417,2.312) and (5.417,3.842) .. (5.644,5.372);
\draw[edgeI] (2.108,1.564) -- (4.403,1.564);
\draw[edgeI] (4.403,1.564) -- (6.698,1.564);
\draw[edgeI] (2.108,1.564) .. controls (3.638,1.791) and (5.168,1.791) .. (6.698,1.564);
\draw[edgeI] (2.108,3.859) -- (4.403,3.859);
\draw[edgeI] (4.403,3.859) -- (6.698,3.859);
\draw[edgeI] (2.108,3.859) .. controls (3.638,4.086) and (5.168,4.086) .. (6.698,3.859);
\draw[edgeI] (2.108,6.154) -- (4.403,6.154);
\draw[edgeI] (4.403,6.154) -- (6.698,6.154);
\draw[edgeI] (2.108,6.154) .. controls (3.638,6.381) and (5.168,6.381) .. (6.698,6.154);
\draw[edgeJ] (2.108,1.564) -- (2.108,3.859);
\draw[edgeJ] (2.108,3.859) -- (2.108,6.154);
\draw[edgeJ] (2.108,1.564) .. controls (1.881,3.094) and (1.881,4.624) .. (2.108,6.154);
\draw[edgeJ] (4.403,1.564) -- (4.403,3.859);
\draw[edgeJ] (4.403,3.859) -- (4.403,6.154);
\draw[edgeJ] (4.403,1.564) .. controls (4.176,3.094) and (4.176,4.624) .. (4.403,6.154);
\draw[edgeJ] (6.698,1.564) -- (6.698,3.859);
\draw[edgeJ] (6.698,3.859) -- (6.698,6.154);
\draw[edgeJ] (6.698,1.564) .. controls (6.471,3.094) and (6.471,4.624) .. (6.698,6.154);
\draw[edgeK] (0.000,0.000) -- (1.054,0.782);
\draw[edgeK] (1.054,0.782) -- (2.108,1.564);
\draw[edgeK] (0.000,0.000) .. controls (0.581,0.685) and (1.284,1.206) .. (2.108,1.564);
\draw[edgeK] (0.000,2.295) -- (1.054,3.077);
\draw[edgeK] (1.054,3.077) -- (2.108,3.859);
\draw[edgeK] (0.000,2.295) .. controls (0.581,2.980) and (1.284,3.501) .. (2.108,3.859);
\draw[edgeK] (0.000,4.590) -- (1.054,5.372);
\draw[edgeK] (1.054,5.372) -- (2.108,6.154);
\draw[edgeK] (0.000,4.590) .. controls (0.581,5.275) and (1.284,5.796) .. (2.108,6.154);
\draw[edgeK] (2.295,0.000) -- (3.349,0.782);
\draw[edgeK] (3.349,0.782) -- (4.403,1.564);
\draw[edgeK] (2.295,0.000) .. controls (2.876,0.685) and (3.579,1.206) .. (4.403,1.564);
\draw[edgeK] (2.295,2.295) -- (3.349,3.077);
\draw[edgeK] (3.349,3.077) -- (4.403,3.859);
\draw[edgeK] (2.295,2.295) .. controls (2.876,2.980) and (3.579,3.501) .. (4.403,3.859);
\draw[edgeK] (2.295,4.590) -- (3.349,5.372);
\draw[edgeK] (3.349,5.372) -- (4.403,6.154);
\draw[edgeK] (2.295,4.590) .. controls (2.876,5.275) and (3.579,5.796) .. (4.403,6.154);
\draw[edgeK] (4.590,0.000) -- (5.644,0.782);
\draw[edgeK] (5.644,0.782) -- (6.698,1.564);
\draw[edgeK] (4.590,0.000) .. controls (5.171,0.685) and (5.874,1.206) .. (6.698,1.564);
\draw[edgeK] (4.590,2.295) -- (5.644,3.077);
\draw[edgeK] (5.644,3.077) -- (6.698,3.859);
\draw[edgeK] (4.590,2.295) .. controls (5.171,2.980) and (5.874,3.501) .. (6.698,3.859);
\draw[edgeK] (4.590,4.590) -- (5.644,5.372);
\draw[edgeK] (5.644,5.372) -- (6.698,6.154);
\draw[edgeK] (4.590,4.590) .. controls (5.171,5.275) and (5.874,5.796) .. (6.698,6.154);
\node[vtx] at (0.000,0.000) {};
\node[vtx] at (0.000,2.295) {};
\node[vtx] at (0.000,4.590) {};
\node[vtx] at (2.295,0.000) {};
\node[vtx] at (2.295,2.295) {};
\node[vtx] at (2.295,4.590) {};
\node[vtx] at (4.590,0.000) {};
\node[vtx] at (4.590,2.295) {};
\node[vtx] at (4.590,4.590) {};
\node[vtx] at (1.054,0.782) {};
\node[vtx] at (1.054,3.077) {};
\node[vtx] at (1.054,5.372) {};
\node[vtx] at (3.349,0.782) {};
\node[vtx] at (3.349,3.077) {};
\node[vtx] at (3.349,5.372) {};
\node[vtx] at (5.644,0.782) {};
\node[vtx] at (5.644,3.077) {};
\node[vtx] at (5.644,5.372) {};
\node[vtx] at (2.108,1.564) {};
\node[vtx] at (2.108,3.859) {};
\node[vtx] at (2.108,6.154) {};
\node[vtx] at (4.403,1.564) {};
\node[vtx] at (4.403,3.859) {};
\node[vtx] at (4.403,6.154) {};
\node[vtx] at (6.698,1.564) {};
\node[vtx] at (6.698,3.859) {};
\node[vtx] at (6.698,6.154) {};
\end{tikzpicture}
\caption{The Hamming graph $K_3^{\cp,3}$.}
\end{figure}

\begin{theorem} \label{thm:Ham-even}
    If $n \ge 2 $ and $k$ is a positive even integer, we have 
    $$\mc (K_n^{\cp, k})= n^{\frac{k}{2}}.$$
\end{theorem}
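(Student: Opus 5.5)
The plan is to prove matching lower and upper bounds, reducing the upper bound to an edge-isoperimetric inequality for Hamming graphs.

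\textbf{Lower bound.} Write $k=2\ell$ and use $K_n^{\cp,k}\cong K_n^{\cp,\ell}\cp K_n^{\cp,\ell}$. Both factors are $\ell(n-1)$-regular, so $\delta(G)=\Delta(H)$ with $G=H=K_n^{\cp,\ell}$. Proposition~\ref{prop:cart}(ii) then gives $\mc(K_n^{\cp,k})\ge n_H=n^{\ell}=n^{k/2}$. Concretely, the color classes are the $n^{\ell}$ copies of $K_n^{\cp,\ell}$ obtained by fixing the last $\ell$ coordinates.

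\textbf{Upper bound.} As in the proof of Theorem~\ref{thm:2-dim-Ham}, I will show that every color class $V_j$ of a $\cg$-coloring of $F=K_n^{\cp,k}$ satisfies $|V_j|\ge n^{k/2}$. Then $\mc(F)\le n^k/n^{k/2}=n^{k/2}$. Since $F$ is $k(n-1)$-regular, the majority condition says that the induced subgraph $F[V_j]$ has minimum degree at least $k(n-1)/2$. Hence, with $s=|V_j|$,
\[
|E(F[V_j])|\ \ge\ \frac{s\,k(n-1)}{4}.
\]
The key tool is the edge-isoperimetric inequality for Hamming graphs (Lindsey's theorem):
\[
|E(F[S])|\ \le\ \frac{n-1}{2}\,|S|\log_n |S| \qquad \text{for every } S\subseteq V(K_n^{\cp,k}).
\]
Combining the two displays yields $\log_n s\ge k/2$, that is, $s\ge n^{k/2}$, as required. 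This inequality is tight exactly on subcubes $K_n^{\cp,t}$, which matches the construction above.

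\textbf{Main obstacle.} The main obstacle is the isoperimetric inequality itself. I would either cite Lindsey's theorem or prove it by induction on the dimension. For the induction, split $S$ according to the last coordinate into layers of sizes $s_1\ge s_2\ge\dots\ge s_n$, where $\sum_i s_i=s$. Edges inside a layer are bounded by the induction hypothesis. Edges between layers $i$ and $j$ number at most $\min(s_i,s_j)=s_j$ for $i<j$, since each vertex has exactly one neighbor in every other layer. This leaves the numerical inequality
\[
\sum_{i=1}^n f(s_i)+\sum_{j=1}^{n}(j-1)s_j\ \le\ f\Bigl(\sum_i s_i\Bigr), \qquad f(x)=\tfrac{n-1}{2}\,x\log_n x .
\]
I would attack it by concavity/convexity arguments: first fix the sum and move mass between layers to reduce to the extremal case where all nonzero $s_i$ are equal. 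In that case the inequality is an identity when the number of nonzero layers is $n$. For $r<n$ equal nonzero layers it is $\tfrac{r-1}{2}\le\tfrac{n-1}{2}\log_n r$, which holds by concavity of $\log$. The smoothing step for unequal layer sizes is the delicate part.
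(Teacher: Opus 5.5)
Your proof is correct, and its upper bound takes a genuinely different route from the paper's.

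The smoothing step you call delicate closes immediately. Fix $\sigma=\sum_i s_i>0$. The vectors with $s_1\ge\dots\ge s_n\ge 0$ and $\sum_i s_i=\sigma$ form a simplex. Its vertices are the vectors with $r$ entries equal to $\sigma/r$ and the others equal to $0$, for $r\in[n]$. On this simplex, $f$ is convex and $\sum_j(j-1)s_j$ is linear, so $f(\sigma)-\sum_i f(s_i)-\sum_j (j-1)s_j$ is concave. It is therefore minimized at a vertex, where it equals $\frac{\sigma}{2}\bigl((n-1)\log_n r-(r-1)\bigr)\ge 0$; this is exactly your equal-layers case. Lindsey's theorem in its usual form only identifies lexicographic initial segments as extremal. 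Citing it would still require a computation of this kind, so your induction is the self-contained option.

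The paper avoids isoperimetry altogether. It fixes $z_0\in V_j$ and splits the vertex set into distance layers $L_s$ from $z_0$. Double counting the $V_j$-edges between consecutive layers gives $|V_j\cap L_s|\ge\binom{r}{s}(n-1)^s$ for $s\le r=k/2$, and these bounds sum to $n^{k/2}$. That argument is short and elementary, but it uses the minimum-degree condition vertex by vertex and is tailored to even $k$.

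Your argument uses only the number of edges induced by $V_j$, hence only the average degree of $F[V_j]$. It gives $|V_j|\ge n^{k/2}$ for every $k$. In particular, it yields $\mc(K_n^{\cp,k})\le n^{k/2}$ for odd $k$ directly, which the paper obtains in Proposition~\ref{prop:Cart-power-odd}(ii) from $\mc(K_n^{\cp,k})^2\le \mc(K_n^{\cp,2k})$.

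Your lower bound via Proposition~\ref{prop:cart}(ii) is the same partition as the paper's coloring by the first $k/2$ coordinates.
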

\begin{proof}
    Let $G=K_n^{\cp, k}$ and $V(G)=[n]^k$. The vertices of $G$ then correspond to $k$-dimensional vectors $z=(i_1,\dots, i_k)$ with $i_j \in [n]$ for every $j \in [k]$, and the distance of two vertices in $G$ equals the number of coordinates in which they differ. Hence,  every $z \in V(G)$ has exactly $k(n-1)$ neighbors.     
    To simplify our discussion, we set $r = k/2$.
    Recall that $V_1, \dots, V_p$ are the color classes of a $\cg$-coloring of $G$ if and only if every $V_j$ induces a subgraph with 
    \begin{equation} \label{eq:min-deg-Cart-power}
        \delta(G[V_j]) \ge \frac{k(n-1)}{2}=r(n-1).
    \end{equation}
        
    We first prove that every color class $V_j$ in a $\cg$-coloring of $G$ satisfies $|V_j| \ge n^{r}$.
    Let $z_0$ be a vertex in $V_j$ and partition $V(G)$ into $L_0, \dots , L_k$ according to the distance from $z_0$. That is $L_s= \{z \in V(G) \colon \dist_G(z,z_0)=s\}$  for every $s \in [k]\cup\{0\}$. Clearly, $L_0=\{z_0\}$ and $|L_1|= k(n-1)$. We make the following general observations.
    \begin{itemize}
    \item[(a)]  If $z \in L_s$, then $N_G(z) \subseteq L_{s-1}\cup L_s \cup L_{s+1}$.
    \item[(b)] Every $z \in L_s$ has $s$ neighbors from $L_{s-1}$, for each $s \in [k]$.\\
    Indeed, $z$ differs from $z_0$ in $s$ coordinates, and a neighbor in $L_{s-1}$ can be obtained if a differing coordinate of $z$ is replaced by the corresponding coordinate of $z_0$.
    \item[(c)] Every $z \in L_s$ has $s(n-2)$ neighbors from $L_s$.\\
    Let $z=(i_1, \dots , i_k)$ and $z_0=(i_1^0, \dots , i_k^0)$. A vertex in $N_G(z) \cap L_s$ can be obtained if we choose a coordinate $i_\ell$ with $i_\ell \neq i_{\ell}^0$ and replace $i_{\ell}$ with a value $i_\ell' \in [n] \setminus \{i_\ell, i_\ell^0\}$. Since $\ell$  can be chosen in $s$ different ways such that $i_\ell \neq i_\ell^0$, and for each such $i_\ell$, there are $n-2$ possibilities to choose an appropriate $i_\ell'$, the number of neighbors of $z$ in $L_s$ is $s(n-2)$. 
    \item[(d)] If $z \in L_s \cap V_j$, then  $|N_G(z) \cap L_{s+1} \cap V_j| \ge (r-s)(n-1)$.\\
    According to~\eqref{eq:min-deg-Cart-power}, we have $|N_G(z) \cap V_j|\ge r(n-1)$. By (b) and (c), vertex $z$ has at most $s$ neighbors from $L_{s-1}\cap V_j$ and at most $s(n-2)$ neighbors from $L_s \cap V_j$. By (a), the remaining at least $ r(n-1) - s- s(n-2)= (r-s)(n-1)$ neighbors from $V_j$ all belong to $L_{s+1} \cap V_j$. It also follows that $V_j \cap L_{s+1}$ is not empty when $s+1 \leq  r$.
    \end{itemize}

    Concentrate now on the intersection of a color class $V_j$ and the sets $L_s$. We state that 
    \begin{equation} \label{eq:Ham-1}
        |V_j \cap L_s|\ge {r \choose s} (n-1)^s
    \end{equation}
    holds for every $0 \le s \le r$. The proof proceeds by induction on $s$. As $V_j \cap L_0= \{z_0\}$, the inequality \eqref{eq:Ham-1} is true for $s=0$. 

    Let $s \ge 1$ and let $m$ be the number of pairs $(z, z')$ satisfying  $z \in V_j \cap L_{s-1}$, $z' \in V_j \cap L_{s}$, and $zz' \in E(G)$. According to (d), every $z \in V_j \cap L_{s-1}$ has at least $(r-s+1)(n-1)$ neighbors in $V_j \cap L_s$. By the hypothesis, $|V_j \cap L_{s-1}|\ge {r \choose s-1} (n-1)^{s-1}$. Therefore, 
    \begin{equation} \label{eq:Ham-2}
      m \ge {r \choose s-1} (n-1)^{s-1}(r-s+1)(n-1)= (r-s+1){r \choose s-1} (n-1)^{s}.  
    \end{equation}
     Now, we consider $z'$ in these $m$ pairs. By (b), a vertex $z' \in V_j\cap  L_s$ may have at most $s$ neighbors from $V_j\cap  L_{s-1}$. This implies
     \begin{equation} \label{eq:Ham-3}
          m \leq s |V_j \cap L_s|.
     \end{equation}
      Comparing \eqref{eq:Ham-2} and \eqref{eq:Ham-3}, we may conclude that \eqref{eq:Ham-1} is true for $s$ since
      \begin{equation*} 
        |V_j \cap L_s|\ge \frac{r-s+1}{s}{r \choose s-1} (n-1)^{s}= {r \choose s} (n-1)^s.
    \end{equation*}           
      As a consequence of \eqref{eq:Ham-1}, we obtain
      \begin{equation*} 
        |V_j|\ge \sum_{s=0}^r {r \choose s} (n-1)^s =((n-1)+1)^r= n^r.
    \end{equation*}
   Since it holds for every color class in every $\cg$-coloring, 
   $$\mc(G) \leq \frac{|V(G)|}{n^r}= \frac{n^k}{n^{\frac{k}{2}}}= n^{\frac{k}{2}}.  $$
   \medskip

  To prove $\mc(G) \geq n^ {\frac{k}{2}} $, we keep the notation $r=\frac{k}{2}$. Consider the following vertex coloring $\varphi$ of $G$, where $r$-dimensional vectors represent the colors, and the color of a vertex is specified according to its first $ r$ coordinates. That is,
   $$ \varphi((i_1, \dots, i_k))= (i_1, \dots, i_{r}).
   $$
   
   Then every vertex $z=(i_1, \dots, i_k)$ shares its color with   
   all the vertices of the form $(i_1, \dots, i_{r}, x_{r +1}, \dots , x_k)$, where  $x_{r+1}, \dots , x_k$ are arbitrary integers from $[n]$. Among them, $(k-r)(n-1)=r(n-1)$ vertices are neighbors of $z$.
   This verifies that  $\varphi$ is a $\cg$-coloring of $G$. As $\varphi$ uses $ n^ {r}$ colors, we conclude $\mc(G) \geq n^ {\frac{k}{2}}$ that establishes the theorem. 
 \end{proof}
 
 \medskip
\begin{proposition} \label{prop:Cart-power-odd}
  \enskip
  \begin{itemize}
      \item[(i)] For every odd $k$, the $k$-dimensional hypercube $Q_k\cong K_2^{\cp,k}$ satisfies $$\mc(Q_k)= 2^{\frac{k-1}{2}}. $$
      \item[(ii)] For every $n \ge 3$ and odd $k$, it holds that
      $$ n^{\frac{k-1}{2}} \le \mc(K_n^{\cp,k}) \leq n^{\frac{k}{2}}.
      $$
      \item[(iii)] If $n$ is a fixed positive integer and $k\rightarrow \infty$, then
      $$  \mc(K_n^{\cp,k})= \Theta(n^{\frac{k}{2}}).
      $$
    \end{itemize}
\end{proposition}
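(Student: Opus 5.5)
The plan is to get all three parts from two ingredients. The lower bounds come from the product construction of Proposition~\ref{prop:cart}(i) combined with Theorem~\ref{thm:Ham-even}. The upper bounds come from the layer-counting argument used in the proof of Theorem~\ref{thm:Ham-even}, adapted to odd $k$.

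\textbf{Lower bounds.} For odd $k$ we write $K_n^{\cp,k}\cong K_n^{\cp,k-1}\cp K_n$. Proposition~\ref{prop:cart}(i), Theorem~\ref{thm:Ham-even} and $\mc(K_n)=1$ then give
$$\mc(K_n^{\cp,k})\ge \mc(K_n^{\cp,k-1})\,\mc(K_n)=n^{\frac{k-1}{2}}.$$
For $k=1$ the bound is trivial. This settles the lower bounds in (i) and (ii).

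\textbf{Upper bound in (ii).} We rerun the argument of Theorem~\ref{thm:Ham-even} with $d=k(n-1)$ and the requirement $\delta(G[V_j])\ge k(n-1)/2$. Set $r=(k-1)/2$. Observations (a)--(c) still hold, and (d) now reads $|N(z)\cap L_{s+1}\cap V_j|\ge k(n-1)/2-s(n-1)\ge (r-s)(n-1)$. The induction then gives $|V_j|\ge n^{r}$, which alone only yields $\mc\le n^{(k+1)/2}$. To do better, we use the extra half degree: $(d)$ actually gives $(r+\tfrac12-s)(n-1)$. Running the double counting with a real parameter $\rho=k/2$ in place of $r$ gives $|V_j\cap L_s|\ge \binom{\rho}{s}(n-1)^s$ with the generalized binomial coefficient. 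The induction step $\frac{\rho-s+1}{s}\binom{\rho}{s-1}=\binom{\rho}{s}$ is valid for real $\rho$, so we can sum over $0\le s\le r+1$ (all these terms are positive since $\rho-s+1>0$ for $s\le r+1$). For $n\ge3$ a cleaner route, and the one I would try first, is to simply compare $|V_j|\ge \sum_{s\le r+1}\binom{k/2}{s}(n-1)^s$ with $n^{k/2}$. If that comparison is awkward, there is a weaker but sufficient fallback. Any $\cg$-coloring of $K_n^{\cp,k}$ restricted along a layer decomposition gives $|V_j|\ge n^r\cdot\sqrt n$: compare with the $(k+1)$-dimensional even case, noting that $V_j\times[n]$ is a color class of a $\cg$-coloring of $K_n^{\cp,k+1}$ (degree $(k+1)(n-1)$, with at least $k(n-1)/2+(n-1)$ same-colored neighbors). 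That yields $n|V_j|\ge n^{(k+1)/2}$, i.e. $|V_j|\ge n^{(k-1)/2}$, which is too weak. So the generalized binomial bound is the real tool. The main obstacle is showing $\sum_{s=0}^{r+1}\binom{k/2}{s}(n-1)^s\ge n^{k/2}$, or at least that each class has size $\ge n^{k/2}$. Since the tail terms of $(1+(n-1))^{k/2}$ beyond $s=r+1$ alternate in sign and decrease in magnitude when $n-1\ge 1$, the truncation after a positive term (the $s=r+1$ term) overestimates the full series. I would verify this alternating-tail estimate carefully; convergence issues aside, it is a finite-sum identity argument. That gives $\mc\le n^k/n^{k/2}=n^{k/2}$.

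\textbf{Part (i).} For $n=2$ we have $(n-1)^s=1$, and the same refined count gives $|V_j|\ge\sum_{s\le r+1}\binom{k/2}{s}$. Here I would instead argue directly in $Q_k$. Each vertex of a class has at least $r+1=(k+1)/2$ neighbors inside it (degrees are integers). So the class induces a subgraph of $Q_k$ with minimum degree $r+1$, and the classical fact that such a subgraph has at least $2^{r+1}$ vertices (proved by the same layer induction with $r+1$ in place of $r$) gives $|V_j|\ge 2^{r+1}$. Hence $\mc(Q_k)\le 2^k/2^{(k+1)/2}=2^{(k-1)/2}$. This integrality trick also works for general $n$, giving $\lceil k(n-1)/2\rceil$ and a cleaner proof of (ii).

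\textbf{Part (iii).} This follows immediately from (ii): $n^{-1/2}\,n^{k/2}\le\mc\le n^{k/2}$.
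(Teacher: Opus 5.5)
Your lower bounds coincide with the paper's. The upper bound in (ii) has a genuine gap, although it can be repaired.

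\textbf{The gap in (ii).} Your double counting with the real parameter $\rho=k/2$ is sound and gives $|V_j|\ge\sum_{s=0}^{r+1}\binom{k/2}{s}(n-1)^s$. The argument you offer for comparing this with $n^{k/2}$, however, fails exactly in the range $n\ge 3$ that (ii) is about. The ratio of consecutive terms is $\frac{\rho-s}{s+1}\,x$, which tends to $-x$. So for $x=n-1\ge 2$ the tail terms eventually grow in absolute value, and $\sum_{s\ge 0}\binom{k/2}{s}x^s$ diverges. There is no ``full series'' for the truncation to overestimate, and since $k/2\notin\mathbb{Z}$ this is not a finite identity either.

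\textbf{How to repair it.} The inequality is nevertheless true; the right tool is Taylor's theorem with Lagrange remainder. For some $\xi\in(0,x)$ you have $(1+x)^{\rho}=\sum_{s=0}^{r+1}\binom{\rho}{s}x^s+\binom{\rho}{r+2}(1+\xi)^{\rho-r-2}x^{r+2}$. Moreover $\binom{\rho}{r+2}<0$, because among the factors $\rho,\rho-1,\dots,\rho-r-1$ only the last one, $-\frac12$, is negative. With this patch your route is valid and even gives more than the statement: $|V_j|\ge\binom{k/2}{r+1}(n-1)^{r+1}$ yields $\mc(K_n^{\cp,k})=O(n^{(k-1)/2})$ for fixed odd $k$ as $n\to\infty$.

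\textbf{The paper's shorter route.} The paper needs none of this analysis. It applies Proposition~\ref{prop:cart}(i) with $G=H=K_n^{\cp,k}$ and uses Theorem~\ref{thm:Ham-even} for the even exponent $2k$, giving $\mc(K_n^{\cp,k})^2\le\mc(K_n^{\cp,2k})=n^k$ in one line. Your fallback multiplied by $K_n$ to reach dimension $k+1$; multiplying $K_n^{\cp,k}$ by itself to reach dimension $2k$ is what works.

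\textbf{A claim to drop.} Remove the remark that the integrality trick gives a cleaner proof of (ii). For odd $n$ the threshold $k(n-1)/2$ is already an integer. For even $n\ge 4$, rounding up adds only $\frac12$, which does not make the threshold a multiple of $n-1$, so the layer induction gains nothing.

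\textbf{Part (i).} Your part (i) is correct and takes a different, self-contained route. The paper gets it from \eqref{eq:k-improper} by quoting the known value $2^{\lfloor k/2\rfloor}$ of the $\lfloor k/2\rfloor$-improper upper chromatic number of $Q_k$. You instead note that for $n=2$ the threshold rounds up to the integer $\frac{k+1}{2}\le k$. The layer induction of Theorem~\ref{thm:Ham-even} then goes through verbatim and gives $|V_j|\ge 2^{(k+1)/2}$.

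\textbf{Part (iii).} Here you need more than (ii): also Theorem~\ref{thm:Ham-even} for even $k$ and part (i) for $n=2$.
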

  \begin{proof} (i) The $k$-dimensional hypercube $Q_k$ is a $k$-regular graph and therefore, according to~\eqref{eq:k-improper}, its $\cg$-chromatic number equals its $\lfloor \frac{k}{2} \rfloor $-improper upper chromatic number. As the latter was proved to be $2^{\lfloor \frac{k}{2}\rfloor}$ in~\cite{BuSaTuPuVa-10},  the statement follows for every odd $k$.

  (ii) By definition, $K_n^{\cp,k}= K_n^{\cp,k-1} \cp K_n$. Since $k-1$ is even, Theorem~\ref{thm:Ham-even} and Proposition~\ref{prop:cart}(i) imply
  $$ \mc(K_n^{\cp,k}) \ge \mc(K_2^{\cp,k-1}) \cdot \mc(K_n) = n^{\frac{k-1}{2}} \cdot 1 =n^{\frac{k-1}{2}}.
  $$
  The upper bound is derived from Theorem~\ref{thm:Ham-even} and Proposition~\ref{prop:cart}(i) by considering the inequality
  $$ \mc(K_n^{\cp,k}) \cdot \mc(K_n^{\cp,k}) \le \mc(K_n^{\cp,2k})= n^k,
  $$
  which implies $\mc(K_n^{\cp,k}) \le n^{\frac{k}{2}}$ as stated.

  (iii) Part (ii) and Theorem~\ref{thm:Ham-even} together imply that 
  $$ \textstyle  \frac{1}{\sqrt{n}}\, n^{\frac{k}{2}} \le \mc(K_n^{\cp,k}) \leq n^{\frac{k}{2}}  $$
  holds for every $n$ and $k$. Therefore, the statement follows. 
      
  \end{proof}
   Proposition~\ref{prop:Cart-power-odd}(i) together with Theorem~\ref{thm:Ham-even} demonstrates that for $n=2$ and an odd integer $k$ the equality $\mc(K_2^{\cp,k}) = \mc(K_2^{\cp,k-1}) $ is always true.
   We show that this is no longer valid when $n$ and $k$ are sufficiently large. The next result also improves the lower bound in Proposition~\ref{prop:Cart-power-odd}(ii) for every $n \ge 7$ and $k \ge 3$.
   \begin{theorem} \label{thm:Ham-power-odd}
      If $n \ge 7$ or $n=5$, and $k$ is an odd integer with $k \ge 3$, then
       $$ \mc(K_n^{\cp,k}) \ge \textstyle \frac{3}{2}\, n^{\frac{k-1}{2}}.
       $$
       \end{theorem}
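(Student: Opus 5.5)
The plan is to reduce the statement to the case $k=3$ and then obtain the general odd $k$ by a product argument. Write $K_n^{\cp,k}=K_n^{\cp,3}\cp K_n^{\cp,k-3}$. Since $k-3$ is even, Theorem~\ref{thm:Ham-even} gives $\mc(K_n^{\cp,k-3})=n^{(k-3)/2}$; for $k=3$ this factor is $K_n^{\cp,0}\cong K_1$ and the reduction is trivial. By Proposition~\ref{prop:cart}(i) it then suffices to show $\mc(K_n^{\cp,3})\ge \frac32 n$, since
\[
\mc(K_n^{\cp,k})\ \ge\ \tfrac32 n\cdot n^{\frac{k-3}{2}}=\tfrac32\, n^{\frac{k-1}{2}}.
\]
Thus everything hinges on an explicit $\cg$-coloring of $G=K_n^{\cp,3}$, with vertex set $[n]^3$, using at least $\frac32 n$ colors. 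As in the proof of Theorem~\ref{thm:Ham-even}, a partition into classes $V_j$ is a $\cg$-coloring exactly when $\delta(G[V_j])\ge \lceil 3(n-1)/2\rceil$.

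The counting heuristic is that there should be about $3n/2$ classes of average size about $\frac23 n^2$. Natural building blocks are ``slabs'' $S\times T\times\{c\}$ and their images under permutations of the three coordinates. Inside such a slab a vertex has $(|S|-1)+(|T|-1)$ neighbours. With $|S|=n$, this already meets the bound $\lceil 3(n-1)/2\rceil$ when $|T|\ge \lceil (n+1)/2\rceil$. That size is just too large for two such slabs to split one plane, so I would not cut a single coordinate in half. Instead, I would split each coordinate set $[n]$ into a part $P$ of size about $n/2$ and its complement $Q$. I would then use three families of classes, one for each coordinate direction, arranged in a cyclic ``pinwheel'' pattern:
\begin{itemize}
\item family~1 consists of the sets indexed by a third coordinate $c$ in a set of size about $n/2$;
\item families~2 and~3 are its images under cyclic shifts of the coordinates;
\item each class is a union of a large slab and some pieces of the neighbouring families' regions.
\end{itemize}
The pieces are chosen so that vertices near the boundary of a slab gain the missing same-colored neighbours along the third direction. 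This gives $3\cdot\frac n2$ colors. When $n$ is odd, the rounding of $|P|$ and $|Q|$ must be handled; I expect this rounding is exactly where the hypothesis $n\ge 7$ or $n=5$ enters, with small or exceptional $n$ failing by one neighbour.

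The verification is a case analysis over the $2^3$ regions $\{P,Q\}^3$ of $[n]^3$. For a vertex in each region, I would count its same-colored neighbours in each of the three directions separately, as in the proof of Proposition~\ref{prop:cart}, and check that the total is at least $\lceil 3(n-1)/2\rceil$. The routine part is the count in the directions along which the class is a full or half line. The main obstacle is designing the construction itself so that the two ``diagonal'' regions $P^3$ and $Q^3$, which the pinwheel does not naturally cover, are absorbed without breaking the condition. The danger is that vertices on the boundary between two families receive only about $n-2$ same-colored neighbours from their own slab.

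My first attempt would be to assign $P^3$ and $Q^3$ entirely to classes of one family, each cut into lines along the direction that family lacks. I would then check numerically, for small $n$ such as $n=5,7,8$, that the minimum internal degree reaches $\lceil 3(n-1)/2\rceil$, before writing the general count.
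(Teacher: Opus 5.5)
\textbf{There is a genuine gap: the proposal never constructs the coloring.} Your reduction to $k=3$ is correct. Up to a permutation of coordinates, the paper's coloring for general odd $k$ is exactly the product of a $k=3$ coloring with the coloring from Theorem~\ref{thm:Ham-even}. But you never construct a $\cg$-coloring of $K_n^{\cp,3}$ with at least $\frac32 n$ colors, and that construction is the whole content of the theorem. A heuristic to be checked numerically is not a proof.

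\textbf{The pinwheel you sketch fails if the slabs stay intact and lie in distinct classes.} Put $h=\lceil\frac{n+1}{2}\rceil$, so $|Q|\ge h$ and $|P|\le\lfloor\frac{n-1}{2}\rfloor$.
\begin{itemize}
\item \emph{Too few classes.} In the cyclic arrangement, each family's index coordinate is the $T$-coordinate of another family. Two such slabs meet whenever the index lies in that $T$-set. So each family has at most $n-h=\lfloor\frac{n-1}{2}\rfloor$ classes, and the three families together have fewer than $\frac32 n$.
\item \emph{$P^3$ cannot be absorbed.} A vertex $(a,b,c)\in P^3$ has only $3(|P|-1)<\frac{3(n-1)}{2}$ neighbours in $P^3$, none in $Q^3$, and $|Q|$ in each of the three slabs adjacent to it. Suppose it joins, say, the class of the family-$1$ slab in the plane $x_3=c$. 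Its $P^3$-neighbours in direction~$3$ cannot join that class, since they would see only $P^3$-vertices of it. So $(a,b,c)$ has at most $|Q|+2(|P|-1)=(n-1)+(|P|-1)<\lceil\frac{3(n-1)}{2}\rceil$ same-coloured neighbours.
\end{itemize}
Hence the slabs would have to be broken up, and you give no indication how.

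\textbf{The missing idea is to keep one coordinate completely free.} If every class has the form $B\times[n]$, each vertex gets $n-1$ same-coloured neighbours in that direction for free. It then remains to partition the rook's graph $K_n\cp K_n$ on $[n]^2$ so that every vertex has at least $\lceil\frac{n-1}{2}\rceil$ neighbours in its own part. In this setting, halving a coordinate is exactly right. The paper takes:
\begin{itemize}
\item the $n$ half-columns $\{a\}\times[h]$;
\item the $\lfloor\frac{n-1}{2}\rfloor$ half-rows $[h]\times\{b\}$ with $b>h$;
\item the single square $\{h+1,\dots,n\}^2$.
\end{itemize}
This gives $n+\lfloor\frac{n-1}{2}\rfloor+1\ge\frac32 n$ classes.

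These are your slabs again, of shape $h\times n$, but both families are full in the same direction. So a half-column and a half-row meet only if both indices lie in $[h]$. Restricting the half-rows to $b>h$ is what lets the half-columns use all $n$ planes, which no cyclic arrangement allows.

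\textbf{Where the hypothesis enters.} The condition $n=5$ or $n\ge 7$ does not come from rounding $|P|$ and $|Q|$. It is precisely the requirement $2(n-h-1)\ge\lceil\frac{n-1}{2}\rceil$ for the vertices of the square class.
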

   \begin{proof}
       Let $n$ and $k$ be integers that satisfy the conditions in the theorem, and let $t=\frac{k-1}{2}$. The vertices of $G=K_n^{\cp,k}$ will be represented by $k$-dimensional vectors as before. To prove the inequality, we define a $\cg$-coloring of $G$ where the colors are represented by  $(t+1)$-dimensional vectors. If $k \ge 5$, then we define the set $A$ of colors as follows.
       \begin{align*}
           A &=\{(a_1,\dots ,a_t,0) \colon a_i \in [n] \mbox{ for every } i \in [t]\}\\
           &\cup \Bigl\{(a_1, \dots , a_{t-1}, 0, a_{t+1}) \colon a_i \in [n] \mbox{ for every } i \in [t-1] \mbox{ and } \textstyle \bigl\lceil\frac{n+1}{2} \bigr\rceil+1 \leq a_{t+1} \leq n  \Bigr\}\\
           &\cup \{(a_1,\dots, a_{t-1},0,0) \colon a_i \in [n] \mbox{ for every } i \in [t-1]\}.
       \end{align*}
       If $k=3$, then $t=1$ and the above definition is not precise without further explanation. For this case, the set of colors is
       $$A'= \{(a_1,0) \colon a_1 \in [n]\} \cup
       \{(0,a_2) \colon \textstyle \bigl\lceil\frac{n+1}{2} \bigr\rceil+1 \leq a_{2} \leq n  \bigr\} \cup \{(0,0)\}.
       $$
      Hence, the colors in $A'$ for $k=3$ can be directly obtained from the general definition of $A$ by substituting $t=1$, setting $[0]= \emptyset$, and removing the first $t-1$ coordinates when referring to the vectors. Applying these conventions, we can handle the cases $k=3$ and $k \ge 5$ in a unified way in the remainder of the proof.
      
      Observe that
      $$|A|= n^t + n^{t-1} \Bigl\lfloor\frac{n-1}{2} \Bigr\rfloor +n^{t-1} \ge n^{t-1} \Bigl(n+\frac{n-2}{2}+1\Bigr)=  \frac{3}{2}\, n^{t}.$$
     A surjective mapping $\varphi \colon V(G) \rightarrow A$ is defined as
 $$\varphi((x_1,\dots, x_{k}))= 
 \begin{cases}
(x_1, \dots, x_{t-1}, x_t, 0) & \text{if } 1\le x_{t+1} \le \lceil \frac{n+1}{2}\rceil,\\[4pt]
(x_1, \dots, x_{t-1}, 0, x_{t+1}) & \text{if } 1\le x_{t} \le \lceil \frac{n+1}{2}\rceil < x_{t+1} \le n,\\[4pt]
(x_1, \dots, x_{t-1}, 0, 0) & \text{if } \lceil \frac{n+1}{2}\rceil +1 \le x_{j} \le n \text{ for every } j \in \{t,t+1\}.
\end{cases}
$$     
 By the definition of $\varphi$, every vertex of $G$ receives a unique color from $A$ and, since $n \ge 3$, every color from $A$ is used. Therefore, it suffices to show that 
 $$\deg_{G[V_i]}(z) \ge \frac{\deg_G(z)}{2}= \frac{k(n-1)}{2}  
 $$
 holds for every $z \in V(G)$ and for the color class $V_i$ that contains $z$. Let $z=(x_1, \dots, x_k)$ and $z \in V_i$. By the definition of $\varphi$, vertices  $z$ and $z'$ always receive the same color if they share the first $t+1$ coordinates $x_1, \dots, x_{t+1}$. This way, by changing one of the coordinates $x_{t+2}, \dots, x_k$, we can identify $(k-t-1)(n-1)=t(n-1)$ neighbors of $z$ in the same color class $V_i$. 
 To show additional vertices in $N_G(z) \cap V_i$, we consider three cases based on the three lines in the definition of $\varphi$.
 \begin{description}
     \item[Case 1. $1\le x_{t+1} \le \lceil \frac{n+1}{2}\rceil$] 
     In this case, every vertex $w=(x_1, \dots , x_t, y, x_{t+2}, \dots, x_k)$ with $y\neq x_{t+1}$ and $1\le y \le \lceil \frac{n+1}{2}\rceil$ receives the same color $(x_1, \dots , x_t,0)$ as $z$. We may therefore conclude
     \begin{equation} \label{eq:thm6}
         \deg_{G[V_i]}(z) \ge t(n-1) + \Bigl\lceil \frac{n+1}{2} \Bigr\rceil -1 \ge \Bigl(t+ \frac{1}{2} \Bigr) (n-1) =\frac{k(n-1)}{2}.
     \end{equation}
     \item[Case 2. $1\le x_{t} \le \lceil \frac{n+1}{2}\rceil < x_{t+1} \leq n$]    
     By the definition of $\varphi$, every vertex $(x_1, \dots , x_{t-1}, y,  x_{t+1}, \dots, x_k)$ with $y\neq x_{t}$ and $1\le y \le \lceil \frac{n+1}{2}\rceil$  receives the same color $(x_1, \dots , 0,x_{t+1})$ as $z$. We obtain the same inequality~\eqref{eq:thm6} here as for Case 1.
      \item[Case 3. $\lceil \frac{n+1}{2}\rceil +1 \le x_{t} \leq n$ \rm{and} $\lceil \frac{n+1}{2}\rceil +1 \le x_{t+1} \leq n$]    
     In this case, $\varphi(z)=(x_1, \dots , x_{t-1}, 0,0)$ and the same color is assigned to the vertices obtained by changing the $t$th  or the $(t+1)$st coordinate of $z$ to a different value from $\{\lceil \frac{n+1}{2}\rceil +1, \dots, n\}$.  This way, we can identify $2(n-\lceil \frac{n+1}{2}\rceil-1)$ new neighbors having the same color as $z$. The conclusion is
     \begin{align*} 
         \deg_{G[V_i]}(z) &\ge t(n-1) + 2\Bigl(n-\Bigl\lceil \frac{n+1}{2}\Bigr\rceil-1\Bigr)\\
         &= t(n-1) + 2\Bigl\lfloor \frac{n-1}{2}\Bigr\rfloor-2\\
          &\ge \frac{k(n-1)}{2},
     \end{align*}
     where the last inequality can be verified separately for the odd values of $n$ with $n \ge 5$ and for the even values with $n \ge 8$.  
 \end{description}
 We have proved that $\varphi$ is a $\cg$-coloring of $G= K_n^{\cp,k}$ that uses $|A| \ge \frac{3}{2} n^t$ colors and hence, $\mc(K_n^{\cp,k}) \ge \frac{3}{2} n^t$ holds under the conditions given in the theorem. 
   \end{proof}

 \begin{remark} \label{rmk:thm7}
     In the above theorem, our goal was to show that the lower bound $n^{\lfloor \frac{k}{2}\rfloor} \le \mc(K_n^{\cp, k})$ can be exceeded if $k$ is odd and $n$ is large enough. However, we do not think that the bound in Theorem~\ref{thm:Ham-power-odd} is always tight. In fact, if $n$ is large enough, the color classes represented by $(x_1, \dots, x_{t-1},0,0)$ can be split into smaller color classes.
 \end{remark} 

  At the end of the section, we put some further remarks on the sharpness of the general lower bounds given in Proposition~\ref{prop:cart}.
\begin{remark} \label{rmk:cart-tightness}
 By Theorem~\ref{thm:Ham-even}, the equality $\mc(K_n^{\cp, k_1 +k_2})= \mc(K_n^{\cp, k_1})\, \mc(K_n^{\cp, k_2})$ holds if $k_1$ and $k_2$ are even. By Proposition~\ref{prop:Cart-power-odd}(i), the same is true if $n=2$, no matter the parities of the exponents.
 These provide infinitely many examples for the tightness of the lower bound in Proposition~\ref{prop:cart}(i). On the other hand, Theorem~\ref{thm:2-dim-Ham} shows that the difference $\mc(G \cp H)- \mc(G)\, \mc(H)$ can be arbitrarily large. 

    Concerning Proposition~\ref{prop:cart}(ii), the tightness is demonstrated directly by Theorem~\ref{thm:2-dim-Ham}. Indeed, if $m\ge n$, then $\delta(K_m) \ge \Delta(K_n)$ and $\mc(K_m \cp K_n)= n$. In contrast, 
    we have infinitely many examples of the form $G=K_2^{\cp, k}$ and $H=K_2^{\cp, 2}$ such that, by Theorem~\ref{thm:Ham-even} and Proposition~\ref{prop:Cart-power-odd}(i), the difference
    $\mc(G \cp H)- m_H \ge 2^{\frac{k+1}{2}}-4$ can be arbitrarily large and the condition in Proposition~\ref{prop:cart}(ii) is satisfied. 
    \end{remark}

\section{Cartesian grids} \label{sec:grids}
Our goal in this section is to determine the $\cg$-chromatic number of the grid $P_m \cp P_n$ for every $m$ and $n$. Along the way, we establish partial results for the $\cg$-chromatic number of cylinders $C_m \cp P_n$ and toruses $C_m \cp C_n$.
We organize our results according to the parity of $m$ and $n$.

\subsection{Both $m$ and $n$ are even} 
\label{subsec:grids-even-even}
The following proposition settles the problem for grids, cylinders, and toruses when both parameters are even. With this result, we obtain further tight examples for the lower bound in Proposition~\ref{prop:cart}(i).
\begin{proposition} \label{prop:grid-1}
    If $m$ and $n$ are even integers with $m \ge 4$ and $n \ge 4$, then
    $$ \mc(P_m \cp P_n)= \mc(C_m \cp P_n) = \mc(C_m \cp C_n)= \frac{mn}{4}.
    $$     
\end{proposition}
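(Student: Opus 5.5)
Lower bound: by Observation~\ref{obs:basic}(iii), $\mc(P_m)=\mc(C_m)=m/2$ and $\mc(P_n)=\mc(C_n)=n/2$ for even $m,n\ge 4$. Proposition~\ref{prop:cart}(i) then gives
$$\mc(P_m\cp P_n),\ \mc(C_m\cp P_n),\ \mc(C_m\cp C_n)\ \ge\ \frac{m}{2}\cdot\frac{n}{2}=\frac{mn}{4}.$$
Concretely, the coloring consists of the $2\times 2$ blocks $\{2a-1,2a\}\times\{2b-1,2b\}$. Each block is a $4$-cycle, so every vertex has at least two neighbors of its own color. A vertex in one of these graphs has degree at most $4$, so two same-colored neighbors suffice in all three graphs.

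Upper bound: it suffices to show that in every $\cg$-coloring of each of the three graphs every color class has at least $4$ vertices. Then the number of colors is at most $mn/4$.

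\begin{itemize}
\item[(1)] Fix a class $V_j$ and $v\in V_j$. If $\deg(v)\ge 3$, the majority condition gives $v$ at least two neighbors in $V_j$, so $|V_j|\ge 3$.
\item[(2)] Suppose $|V_j|=3$. Then $V_j$ induces a path $x y z$ or a triangle. There are no triangles in these bipartite-like products; grids, and cylinders and toruses with $m,n\ge4$, contain no triangles. So $V_j$ induces a path, and its endpoint $x$ has only one neighbor in $V_j$. The majority condition then forces $\deg(x)\le 2$, so $x$ is a corner vertex of the grid; the same holds for $z$.
\item[(3)] In the torus every vertex has degree $4$, and in the cylinder $C_m\cp P_n$ every vertex has degree at least $3$. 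So $|V_j|=3$ is impossible there, and likewise $|V_j|\le 2$ is impossible.
\item[(4)] In the grid, two corners at distance $2$ would need a common neighbor. Distinct corners are at distance at least $\min\{m,n\}-1\ge 3$, a contradiction.
\item[(5)] The cases $|V_j|\le 2$ in the grid: a singleton is impossible since every vertex needs at least one same-colored neighbor. A class of two vertices forces both to have degree $\le 2$, so both would be adjacent corners, which cannot happen.
\end{itemize}

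Hence $|V_j|\ge 4$ for every color class, and the upper bound follows. The only delicate step is the small-class analysis in the grid: ruling out classes of size $2$ or $3$ using corners and their mutual distances. This is exactly where the hypotheses $m,n\ge4$ enter. For $n=2$, the class size $2$ at corners would genuinely occur, e.g.\ in $P_m\cp P_2$.
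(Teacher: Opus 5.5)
Your proof is correct and follows the same route as the paper. The lower bound comes from Proposition~\ref{prop:cart}(i) with $\mc(P_k)=\mc(C_k)=k/2$, and the upper bound from showing that every color class has at least four vertices. The one difference is local: you rule out classes of size at most $3$ by direct case analysis (degrees, plus distinct corners being at distance at least $3$), while the paper argues that $G[V_i]$ contains a cycle, or, for a grid class containing two corners, uses connectivity of classes in a $\mc$-coloring; your version has the small advantage of applying to every $\cg$-coloring without needing connectivity.
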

  \begin{proof}
      As it was shown in~\cite{majority-1}, $\mc(P_k)=\mc(C_k)=  k/2$ holds when $k$ is even.  Let $m$ and $n$ be even numbers, and let $G$ be the grid $P_m \cp P_n$, the cylinder $C_m \cp P_n$, or the torus $C_m \cp C_n$. By Proposition~\ref{prop:cart}(i), $\mc(G) \ge \frac{1}{4}mn$.

      Consider a $\mc$-coloring $\varphi$ of $G$ and the color classes $V_1, \dots , V_\ell$ associated with $\varphi$. We claim that $|V_i| \ge 4$ holds for every $i \in [\ell]$. If $G$ is a cylinder or a torus then the majority condition~\eqref{eq:majority-1} implies $\delta(G[V_i])\geq 2$. Hence, $G[V_i]$ contains a cycle and $|V_i| \ge 4$ holds. Assume now that $G$ is a grid. If $V_i$ contains at most one corner vertex of $G$, then again, by the majority condition~\eqref{eq:majority-1}, there is at most one leaf in $G[V_i]$. Consequently, the subgraph $G[V_i]$ contains a cycle that ensures $|V_i| \ge 4$. If $V_i$ contains at least two corner vertices,$G[V_i]$ may contain a cycle and $|V_i| \ge 4$ follows as before, or it may be a forest. For this latter case, since  $m,n \ge 4$ and  $V_i$ induces a connected graph by Observation~\ref{obs:basic}(i), the inequality $|V_i| \ge 4$ remains true. It follows that each color class contains at least four vertices and $\mc(G) \leq \frac{1}{4}mn.$ Consequently, the equality holds. 
      
  \end{proof}

\begin{remark}
    Let $m$ and $n$ be even. The cases not covered by Proposition~\ref{prop:grid-1} are the grids $P_m \cp P_2$ with $m \ge 2$ and the cylinders $C_m \cp P_2$ with $m \ge 4$. Since $\delta(C_m \cp P_2) =3$, the proof of Proposition~\ref{prop:grid-1} can be easily applied to the cylinder $C_m \cp P_2$ to get the  formula $\mc(C_m \cp P_2) = m/2$. In a $\mc$-coloring of the grid $P_m \cp P_2$, we have two color classes of size two (both containing two adjacent corner vertices), and the remaining color classes are each of size four. Thus the formula for an even $m$ is $\mc(P_m \cp P_2)= \frac{1}{2}m+1$.
\end{remark}

\subsection{Different parities} \label{subsec:grid-odd-even}
\begin{theorem} \label{thm:grid-even-odd}
 If $m$ is even, $n$ is odd such that $m \ge 4$ and $n \ge 5$, then
   \begin{equation} \label{eq:grid-thm-odd-even-1}
       \mc(P_m \cp P_n)= \mc(C_m \cp P_n) =   1 + \Bigl\lfloor\frac{m}{2}\Bigr\rfloor\Bigl\lfloor\frac{n}{2}\Bigr\rfloor.
   \end{equation} 
\end{theorem}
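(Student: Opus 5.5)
The plan is to prove the lower bound by an explicit construction and the upper bound by a counting argument. The counting argument hinges on the parity of $n$ and on a classification of the small color classes.

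\textbf{Lower bound.} Index the vertices by $(i,j)\in[m]\times[n]$, where $i$ is the $P_m$ (or $C_m$) coordinate. Let one color class be the border layer $R=\{(i,1)\colon i\in[m]\}$.
\begin{itemize}
\item For the grid, every inner vertex of $R$ has degree $3$ and two neighbors in $R$. The two corners have degree $2$ and one neighbor in $R$. So the majority condition~\eqref{eq:majority-1} holds on $R$.
\item For the cylinder, $R$ induces a $C_m$ and every vertex of $R$ has degree $3$, so again the condition holds.
\end{itemize}
The remaining vertices form $P_m\cp P_{n-1}$ (respectively $C_m\cp P_{n-1}$), where both parameters are even. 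Color it as in the proof of Proposition~\ref{prop:cart}(i): take the product of the $\mc$-colorings of $P_m$ (or $C_m$) and of $P_{n-1}$. The classes of $P_{n-1}$ are consecutive pairs, so every class of the product is a $2\times 2$ square. Every vertex of such a square has exactly two neighbors inside its square. Every vertex outside $R$ has degree at most $4$, and the corners of row $2$ have degree $3$. Hence the majority condition holds everywhere. This gives $1+\frac m2\cdot\frac{n-1}2$ colors for both graphs.

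\textbf{Upper bound.} Fix a $\mc$-coloring. By Observation~\ref{obs:basic}(i) each class is connected. The argument of Proposition~\ref{prop:grid-1} shows every class has at least $4$ vertices.
\begin{enumerate}
\item Classify the classes of size exactly $4$. In the cylinder, a class with minimum degree at least $2$ on $4$ vertices must be an induced $4$-cycle, that is, a $2\times2$ square. In the grid, the additional possibilities are the few connected sets of size $4$ containing corner vertices; these contain two corners, which forces $m=4$ or $n\le 4$, so they can be handled by a short separate check.
\item Use the parity of $n$. A $2\times 2$ square meets every $P_n$-layer ${}^{i}P_n$ in $0$ or $2$ vertices. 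Since $n$ is odd, each of the $m$ layers contains a vertex from a non-square class. Thus the set $W$ of vertices lying in non-square classes satisfies $|W|\ge m$, and these vertices lie in classes of size at least $5$.
\item A crude count is not enough. With $q$ squares and $r$ other classes we have $4q+5r\le mn$, which only yields $q+r\le (mn-|W|)/4+|W|/5$, too weak. A local charging argument is therefore needed to show that non-square classes are rare.
\end{enumerate}

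For the charging step, the first thing to try is to show that every non-square class $C$ satisfies $|C|\ge 4+|\{i\colon C\cap {}^iP_n\ne\emptyset,\ |C\cap {}^iP_n| \text{ odd}\}|$, or a similar excess bound. Then each of the $m$ odd layers forces one extra vertex, beyond four per class, in some non-square class. Summing gives $mn\ge 4k+m-4$, that is, $k\le 1+m(n-1)/4$, which is the claim. The proof of this excess inequality is the main obstacle. It would go by a case analysis on the shape of a class: using $\delta(G[C])\ge 2$ away from the border, a class meeting a layer in an odd number of vertices must turn or branch, and turning or branching costs extra vertices. If the inequality fails in this exact form, the fallback is a discharging scheme that moves weight from odd layers to the classes meeting them, checking locally that each non-square class receives total charge at most $|C|-4$.
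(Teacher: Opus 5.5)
Your lower-bound construction is correct and matches the paper's. The upper bound has a genuine gap, and its key inequality is false as stated. The border layer $R=\{(i,1)\colon i\in[m]\}$ from your own construction is a non-square class with $|R|=m$. It meets every column ${}^iP_n$ in exactly one vertex, so $|C|\ge 4+|\{i\colon |C\cap {}^iP_n| \text{ odd}\}|$ would demand $m\ge m+4$. More decisively, if your inequality held for all non-square classes, summing would give $mn\ge 4k+m$, i.e.\ $k\le m(n-1)/4$, which contradicts your lower bound. The ``$-4$'' in your summed inequality does not follow from the inequality you stated. The same failure occurs for every row $C_m^j$ of the cylinder. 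For $m=4$ these rows are $4$-cycles that are not $2\times 2$ squares, so your size-$4$ classification in step~1 is also wrong there. The layer classes therefore have to be split off and handled separately. This is exactly the paper's Case~1: if some class is a layer, every other class has at least four vertices, so $mn\ge m+4(t-1)$.

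Even with that exception, the remaining inequality is left unproven. You call it the main obstacle and offer only an unspecified discharging fallback, so the argument is incomplete. The paper handles the no-layer case with a different parity weighting. A vertex is odd or even according to the parity of its $P_n$-coordinate, and odd vertices outnumber even ones by exactly $m$. Every class other than $P_m^1$ and $P_m^n$ contains at least two even vertices: via a cycle, or, for a class with two corners, via connectivity and $n\ge 5$. Hence $|V_i|\ge o(V_i)-e(V_i)+4$, and summing gives $mn\ge m+4t$.

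Your column-parity route can in fact be completed. Since $|C|-|\{i\colon |C\cap{}^iP_n| \text{ odd}\}|=2\sum_i\lfloor |C\cap{}^iP_n|/2\rfloor$, the corrected inequality only asks for two columns meeting $C$ in at least two vertices, or one column meeting it in at least four. If this fails, all vertical edges of $G[C]$ lie in a single column containing at most three vertices of $C$. The majority condition (every non-corner vertex needs two neighbors in its class) together with $n\ge 5$ then forces $C$ to be a full row layer. Combined with the separate layer case and the fact that each of the $m$ columns has odd order, this yields the bound. None of this argument, however, is in your proposal.
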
 
\begin{proof}
    First, let $G=P_m \cp P_n$ where $m \ge 4$ is even, $n \ge 5$ is odd. We may set $V(P_m)=[m]$ and $V(P_n)=[n]$. A $\cg$-coloring $\varphi$ of $G$ with $1+\frac{1}{4} m(n-1)$ colors can be obtained by defining the color classes 
    \begin{align} \label{eq:grid-col-classes}
      \nonumber  V_0 &= \{(i, n) \colon i \in [m] \} \mbox{ and}\\
        V(p,q) &= \{2p-1,2p\} \times \{2q-1,2q\},
        \end{align}
    where $p \in [\frac{m}{2}]$ and $q \in [\frac{n-1}{2}]$. Note that the induced subgraph $G[V(p,q)]\cong C_4$ and it is $2$-regular, while $G[V_0]$ is a path between two corner vertices. Thus, every vertex has at least half of its neighbors in its color class and hence $\varphi$ is a $\cg$-coloring of $G$. Therefore, $\mc(G) \ge 1+\frac{1}{4} m(n-1)$.
    \medskip

    To prove the reverse inequality, assume that $\varphi$ is a $\mc$-coloring of $G$, and the corresponding color classes are $V_1, \dots ,V_t$. By Observation~\ref{obs:basic}(i),  each color class induces a connected subgraph in $G$. In the proof, we say that a vertex $v=(x_1,x_2)$ from $G$ is \emph{odd} if $x_2$ is an odd number. Otherwise, $v$ is \emph{even}. Given a color class $V_i$, we denote by $o(V_i)$ and $e(V_i)$, respectively, the number of odd and even vertices in $V_i$. Clearly, $o(V_i) +e(V_i)= |V_i|$ holds for every $i \in [t]$. We consider two cases concerning the color classes $V_1, \dots, V_t$ to prove that $t \le 1+\frac{1}{4} m(n-1)$. Note that each of the layers $P_m^1$ and $P_m^n$ can serve as a color class in a $\cg$-coloring of $G$.
    
    \begin{description}
        \item[Case 1: $P_m^1$ or $P_m^n$ is a color class.] 
       We may assume that $V_t$ is such a color class. Then $|V_t|=m$. It can be shown, as in the proof of Proposition~\ref{prop:grid-1}, that $|V_i| \ge 4$ for each $i \in [t-1]$. Therefore,
    $$|V(G)|= mn \ge m+4(t-1),
    $$
    and we may conclude 
    $$\mc(G)=t \leq 1+ \frac{m(n-1)}{4}= 1 + \Bigl\lfloor\frac{m}{2}\Bigr\rfloor\Bigl\lfloor\frac{n}{2}\Bigr\rfloor.
    $$

    \item[Case 2: All color classes differ from $P_m^1$ and $P_m^n$.] 
    For this case, we first observe that $e(V_i)\ge 2$ holds for every $i \in [t]$. Indeed, if $V_i$ contains at most one corner vertex from the grid, then there is at most one leaf in $G[V_i]$, and then the subgraph contains a cycle that ensures $e(V_i) \ge 2$. If $V_i$ contains at least two corner vertices, it might be a tree. However, since  $n \ge 5$ and  
    $V_i$ is different from $P_m^1$ and $P_m^n$, the inequality $e(V_i) \ge 2$ remains valid. This fact implies the following inequality for every $i \in [t]$
    \[
    o(V_i) - e(V_i) = |V_i|-2e(V_i) \leq |V_i|-4.
    \]
    Since $G$ contains $\frac{1}{2} m(n+1)$ odd and $\frac{1}{2} m(n-1)$ even vertices, we may further infer 
    \begin{align*}
        mn = \displaystyle\sum^t_{i=1} |V_i| &\geq \displaystyle\sum^t_{i=1} (o(V_i)-e(V_i) +4) \\
        &= \frac{1}{2} m(n+1) - \frac{1}{2} m(n-1) + 4t \\
        &= m+4t.
    \end{align*}
    As $t=\mc(G)$, $m$ is even, and $n$ is odd, the above inequality is equivalent to $\mc(G) \leq \lfloor\frac{m}{2} \rfloor  \lfloor\frac{n}{2} \rfloor$. 
    \end{description}
    The two cases together imply
    $$ \mc(P_m\cp P_n) \leq \max\Bigl\{ \Bigl\lfloor\frac{m}{2} \Bigr\rfloor \Bigl\lfloor\frac{n}{2} \Bigr\rfloor,
    1+ \Bigl\lfloor\frac{m}{2} \Bigr\rfloor
    \Bigl\lfloor\frac{n}{2} \Bigr\rfloor \Bigr\}=
    1+ \Bigl\lfloor\frac{m}{2} \Bigr\rfloor
    \Bigl\lfloor\frac{n}{2} \Bigr\rfloor, 
    $$
    which finishes the proof for grids. We may also conclude that, under the conditions of the theorem, every $\mc$-coloring of $P_m\cp P_n$ contains a color class corresponding to $P_m^1$ or $P_m^n$.
  \medskip
  
    For a cylinder $C_m \cp P_n$, where $m \ge 4$ is even, and $n \ge 5$ is odd, the proof proceeds analogously to the proof for grids. That is, we define the color classes of $\varphi$ exactly as in~\eqref{eq:grid-col-classes} (but then $V_0$ induces a cycle $C_m$ instead of a path $P_m$); introduce the notion of odd and even vertices in the same way. The condition for Case~1 is changed to require that a color class corresponds to an \emph{arbitrary} layer $C_m^s$ with $s \in [n]$, while the new condition for Case~2 excludes this possibility. All observations in the proof remain valid. In particular, $o(V_i) -e(V_i) +4 \le |V_i|$ holds for every color class $V_i$ in Case~2. Consequently, the formula~\eqref{eq:grid-thm-odd-even-1} is true for cylinders. However, for $C_4 \cp P_n$, a $\cg$-coloring with $\mc(C_4 \cp P_n)=n$ colors, which is basically different from~\eqref{eq:grid-col-classes}, is also possible. In this coloring, each layer $C_4^j$, for $j \in [n]$, is monochromatic.
\end{proof}
\begin{remark}
    Let $m$ and $n$ be even and odd, respectively. The cases not covered by Theorem~\ref{thm:grid-even-odd} are the grids $P_2 \cp P_n$ with $n \ge 3$, $P_m \cp P_3$ with $m \ge 4$, and the cylinders $C_m \cp P_3$ with $m \ge 4$. Direct checking together with facts from the proof of Theorem~\ref{thm:grid-even-odd} shows that the formula~\eqref{eq:grid-thm-odd-even-1} remains valid for all these small cases.  
    \end{remark}

    The main part of the proof of Theorem~\ref{thm:grid-even-odd} is based on the fact that every color class different from a layer contains two even vertices. In a cylinder $C_m \cp P_n$, where $m$ is odd and $n$ is even, the appropriate definition of odd and even vertices leaves two consecutive odd layers, namely $^1 P_n$ and $^m P_n$. Therefore, a proof analogous to that of Theorem~\ref{thm:grid-even-odd} cannot be given to handle the case with an odd cycle $C_m$.

    We also note that the above proof idea can be applied to grids $P_m\cp P_n$ with odd $m$ and $n$, but the obtained upper bound is not sharp. In the next section, we apply a different approach to establish the exact values for $P_m \cp P_n$ if $m$ and $n$ are odd.
    
\subsection{Both $m$ and $n$ are odd} \label{subsec:grids-odd-odd}

\begin{theorem} \label{thm:grid-odd-odd}
 If $m \ge 3$ and $n\ge 3$ are odd integers, then
   \begin{equation} \label{eq:grid-thm-odd-odd}
       \mc(P_m \cp P_n)=   1 + \Bigl\lfloor\frac{m}{2}\Bigr\rfloor\Bigl\lfloor\frac{n}{2}\Bigr\rfloor.
   \end{equation} 
\end{theorem}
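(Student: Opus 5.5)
The lower bound comes from an explicit coloring, and the upper bound from a counting argument that penalizes color classes containing few interior vertices.

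\textbf{Lower bound.} Set $V(P_m)=[m]$ and $V(P_n)=[n]$. Use the $\frac{(m-1)(n-1)}{4}$ blocks $V(p,q)=\{2p-1,2p\}\times\{2q-1,2q\}$ for $p\in[\frac{m-1}{2}]$ and $q\in[\frac{n-1}{2}]$. Each block induces a $C_4$, so every vertex in it has at least $2$ neighbors of its own color. The remaining vertices form the ``hook'' $V_0=(\{m\}\times[n])\cup([m]\times\{n\})$, which induces a path from the corner $(m,1)$ through $(m,n)$ to $(1,n)$. On this path the two end corners have degree $2$ and one same-colored neighbor. Border vertices have degree $3$ and two same-colored neighbors. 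The vertex $(m,n)$ is a corner with both of its neighbors in $V_0$. So this is a $\cg$-coloring with $1+\lfloor\frac m2\rfloor\lfloor\frac n2\rfloor$ colors.

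\textbf{Upper bound.} Fix a $\mc$-coloring with classes $V_1,\dots,V_t$, each connected by Observation~\ref{obs:basic}(i). A counting argument weighted only by parity of rows (as in Theorem~\ref{thm:grid-even-odd}) is too weak here, as the paper notes. I would instead use a discharging-type weight that exploits both coordinates. Call a vertex \emph{doubly even} if both coordinates are even. There are exactly $\frac{m-1}2\cdot\frac{n-1}2=\lfloor\frac m2\rfloor\lfloor\frac n2\rfloor$ such vertices, and all of them are inner vertices. The goal is to show that at most one color class contains no doubly even vertex. That gives $t\le 1+\lfloor\frac m2\rfloor\lfloor\frac n2\rfloor$ immediately.

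Take a class $V_i$ with no doubly even vertex. Step one is to show that $V_i$ contains no cycle. Any cycle in the grid encloses a unit square, and every unit square $\{a,a+1\}\times\{b,b+1\}$ contains a doubly even vertex only if $a$ and $b$ are both... that fails for odd $a,b$ windows at the boundary. So the real claim must be more subtle: a cycle in $G[V_i]$ avoiding all doubly even vertices lies in the ``odd lattice'' graph. There, the vertices with at least one odd coordinate induce a grid of rows and columns, and any cycle in it surrounds a doubly even vertex $w$. The majority condition, applied along the boundary of the region enclosed by the cycle, then forces a contradiction. The idea is to take an innermost such cycle and a vertex of $V_i$ on it adjacent to the enclosed doubly even vertex, and use that inner vertices need $2$ of $4$ neighbors.

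Step two: if $G[V_i]$ is a forest, each leaf must be a corner (a degree-$2$ vertex with $1$ same-colored neighbor), and vertices of degree $1$ in $G[V_i]$ require grid degree $\le2$. So $G[V_i]$ is a tree whose leaves are corners. A tree with at least two leaves is then a path between corners whose internal vertices have at least $\frac12$ of their neighbors inside, which forces it to run along the border. Two such classes would both need corners. A border path between corners plus the constraint of avoiding doubly even vertices is only possible along the border, so they would have to share the border. I would argue that two disjoint border paths between corners are impossible unless they are full opposite sides. I then rule out the case of two opposite sides directly: a neighboring inner row or column, of odd length, cannot then be partitioned into classes satisfying the majority condition. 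I expect the parity of $m,n$ to enter exactly here.

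\textbf{Main obstacle.} The hardest step is step one, excluding cycles (and tree-like classes with a single corner) in a class that avoids every doubly even vertex. I would try first an extremal argument: choose the enclosed doubly even vertex $w$ and the cycle of $V_i$ around it with minimum enclosed area. Then show that some vertex on the cycle adjacent to the interior has at least $3$ of its $4$ neighbors outside $V_i$. If that fails, I would fall back on weighting: give each class a charge equal to the number of doubly even vertices it contains plus $\frac14$ per corner it contains. Then prove that each class has charge at least $1$, except that total corner charge is $1$, which yields the same bound.
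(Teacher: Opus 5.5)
Your lower bound is the paper's construction. Counting doubly even vertices is exactly the paper's Claim~2 ($|T_1|\le |X_2|$). So everything rests on bounding the classes that contain no doubly even vertex, and that is where the proposal has genuine gaps.

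\textbf{Step one cannot follow from the majority condition alone.} Work in $P_9\cp P_9$. Let $V_1$ be the boundary of $\{3,\dots,7\}^2$; it lies in rows and columns $3$ and $7$, so it has no doubly even vertex. Let $V_2=\{4,5,6\}^2$, and let $V_3$ be the remaining vertices. This is a $\cg$-coloring in which every vertex of the cycle $V_1$ has exactly two neighbors in $V_1$. So no vertex of this innermost cycle has $3$ of its $4$ neighbors outside $V_1$, and no local contradiction exists.

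Such classes can only be removed by using the maximality of $t$ through an exchange argument. The paper chooses, among all $\mc$-colorings, one that minimizes $R(\varphi)$, the total number of interior vertices of type-$0$ classes. It then takes the upper-leftmost vertex $v\in X_0$ of the cycle and recolors $\{v,u,w,z\}$ as a new $4$-cycle class around the enclosed doubly even vertex $z$. The rest of the cycle is merged with $V_j\setminus\{z\}$. This keeps $t$ fixed and decreases $R$. Note that the paper never proves your target statement, that at most one class of an arbitrary fixed $\mc$-coloring lacks a doubly even vertex. It only obtains $|T_0|\le 2$ for a suitably chosen coloring.

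Your fallback charging also fails. The hook in your own optimal coloring contains three corners and no doubly even vertex, so its charge is $3/4<1$. The ring $V_1$ above has charge $0$.

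\textbf{Step two is incorrect as stated.} An induced path between two non-opposite corners need not follow the border. For $m\ge 5$, take the path from $(m,1)$ along $P_m^1$ to $(m-2,1)$, then through ${}^{m-2}\!P_n$ to $(m-2,n)$, then along $P_m^n$ to $(m,n)$. It satisfies the majority condition, and it extends to a $\cg$-coloring of the whole grid. These are the paper's semicycles, and they need the same interior-vertex and exchange treatment as cycles.

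Trees with three or more corner leaves contain opposite corners. You do not treat that case separately. The paper settles it with its Claim~1, using $|V_i|\ge m+n-1$ for such a class and $|V_j|\ge 4$ for all others.

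\textbf{Two opposite border layers are not locally impossible.} For $n\ge 5$, the layers $P_m^1$ and $P_m^n$ together with one class for all remaining vertices form a $\cg$-coloring. So this case must be handled by counting. The paper deletes the two layers, which leaves a $\cg$-coloring of $P_m\cp P_{n-2}$ with $t-2$ colors. It then applies induction on $m+n$, using $\lfloor m/2\rfloor\ge 2$. The base case $P_3\cp P_n$ uses the fact that every type-$1$ class needs two of the $n-2$ inner vertices of ${}^2\!P_n$.
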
 
\begin{proof}
    Let $G=P_m \cp P_n$ where $m\ge 3$ and $n\ge 3$ are odd integers. We set $V(P_m)=[m]$ and $V(P_n)=[n]$. A $\cg$-coloring of $G$ with $1+\frac{m-1}{2} \frac{n-1}{2}$ colors is defined by the color classes 
    \begin{align} \label{eq:grid-col-classes-2}
      \nonumber  V_0 &= \{(i, n) \colon i \in [m] \} \cup \{(m,j) \colon j \in [n-1]\}
      \mbox{ and}\\
        V(p,q) &= \{2p-1,2p\} \times \{2q-1,2q\},
        \end{align}
    for $p \in [\frac{m-1}{2}]$ and $q \in [\frac{n-1}{2}]$. This establishes the lower bound $1+\lfloor\frac{m}{2} \rfloor \lfloor\frac{n}{2} \rfloor \leq \mc(G)$.
  \medskip
  
    For the reverse inequality, we first settle the case when a color class contains opposite corner vertices.
    \begin{claim} \label{claim:odd-odd-1}
        If there exists a $\mc$-coloring of $G$ so that a color class $V_i$ contains two opposite corner vertices, then $\mc(G) \leq 1+\lfloor\frac{m}{2} \rfloor \lfloor\frac{n}{2} \rfloor $.
    \end{claim}
    \textit{Proof.} Let $\mc(G)=t$. If $V_i$ contains two opposite corner vertices, the connectivity of $G[V_i]$ implies $|V_i| \ge m+n-1$. Each remaining color class $V_j$ includes at most one corner vertex. Therefore, $V_j$ contains a cycle and $|V_j|\ge 4$, for each $j \in [t]\setminus \{i\}$. It follows that
    $ 4(t-1) +(m+n-1) \le mn
    $, and we may conclude that     
    $$ \mc(G)=t \le 1+ \frac{mn-m-n+1}{4}=
    1 + \Bigl\lfloor\frac{m}{2}\Bigr\rfloor\Bigl\lfloor\frac{n}{2}\Bigr\rfloor 
    $$
    holds as stated. \smallqed
    \medskip
    
     By Claim~\ref{claim:odd-odd-1}, it remains to consider the case when every color class contains either at most one corner vertex, or exactly two corner vertices such that these are non-opposite corner vertices. Observe that $m$ and $n$ are interchangeable in the theorem. Furthermore, the vertices of $P_m$ and $P_n$ may be renamed in reverse order. Therefore, whenever we consider two (non-opposite) corner vertices from a color class, we may suppose (for the local properties) that these two corner vertices are $(m,1)$ and $(m,n)$.
     
     Before proceeding to further claims, we start to introduce the terminology used in the proof. For $i \in \{0,1,2\}$, let the set $X_i$ contain a vertex $v=(x_1,x_2)$ from $V(G)$ if $v$ has exactly $i$ even coordinates. As $m$ and $n$ are odd, there are exactly $\frac{1}{2}(m-1)$ even elements of $[m]$ and $\frac{1}{2}(n-1)$ even elements of $[n]$. Consequently, $|X_2|= \frac{1}{4} (m-1)(n-1)$. Further, each corner vertex is in $X_0$, and each border vertex is in $X_0 \cup X_1$. We also note that if $v \in X_0 \cup X_2$, then all neighbors of $v$ are in $X_1$. 

    Let $\varphi$ be a $\mc$-coloring of $G$ with color classes $V_1, \dots , V_t$. Hence $t=\mc(G)$. By Observation~\ref{obs:basic}(i), every $V_i$, for $i \in [t]$, induces a connected subgraph in $G$. We say that a color class is of type-$0$ if $V_i \cap X_2=\emptyset$; otherwise, $V_i$ is of type-$1$. Let $T_0$ and $T_1$ denote the set of color classes of type-$0$ and type-$1$, respectively, in a given $\mc$-coloring of $G$. 
    \begin{claim} \label{claim:odd-odd-2}
       $ \mc(G) \leq |T_0| + \lfloor\frac{m}{2}\rfloor \lfloor\frac{n}{2}\rfloor.
       $
    \end{claim}
    \textit{Proof.} By definition, $\mc(G)=|T_0| + |T_1|$. Moreover, since every color class of type-$1$ contains a vertex from $X_2$, we have $|T_1|\leq  |X_2|= \frac{1}{4}(m-1)(n-1) $, and the upper bound follows. \smallqed
    \medskip

    A cycle $C$ in a color class $V_i$ corresponds to a set of vertices $C \subseteq V_i$, which induces a cycle. A set $D \subseteq V_i$ is a \emph{semicycle} in $V_i$, if it induces a path between two (non-opposite) corner vertices of the grid and also contains some inner vertices from the grid. By the majority condition~\eqref{eq:majority-1}, every color class $V_i$ contains a cycle except the cases when $V_i$ itself is a semicycle or a layer from $\{P_m^1, P_m^n, ^1\! P_n, ^m\! P_n\}$.
    \medskip
   
    We say that a vertex $u \in V(G)$ is an \emph{interior vertex of a cycle} $C \subseteq V_i$, if 
    \begin{itemize}
        \item $u \notin V_i$, and
        \item it holds for every vertex $r \in V(G)$ that if $r$ is a border vertex or a corner vertex of $G$, then every $u,r$-path contains a vertex (possibly, $r$ itself) from $C$.
    \end{itemize}
    Let $D \subseteq V_i$ be a semicycle that is a path between the (non-opposite) corner vertices $w_1$ and $w_2$, and let $W$ denote the layer in $G$ that is a shortest path between $w_1$ and $w_2$. A vertex $u \in V(G)$ is an \emph{interior vertex of the semicycle} $D$, if 
       \begin{itemize}
        \item $u \notin V_i$, and
        \item it holds for every vertex $r \in V(G)\setminus(W\setminus D)$  that if $r$ is a border vertex or a corner vertex of $G$, then every $u,r$-path contains a vertex (possibly, $r$ itself) from $D$.
    \end{itemize}
   Finally, we say that \emph{a vertex $u$ is an interior vertex of the color class $V_i$}, if $u \notin V_i$ and $u$ is an interior vertex of a cycle or semicycle $C \subseteq V_i$. For a color class $V_i$, we denote by $\cI(V_i)$ the set of interior vertices of $V_i$. By these definitions,  $\cI(V_i)$ contains no corner vertex from the grid. 
    
    Note that the definition of interior vertices of a cycle $C \subseteq V_i$ corresponds to the usual notion of interior vertices of a closed curve on a plane, with the additional condition that interior vertices belong to color classes different from $V_i$. The interior vertices of a semicycle $D \subseteq V_i$ are the vertices in $W \setminus V_i$, together with the interior vertices of the cycle induced by $D \cup W$.
   
     As the color classes induce connected subgraphs, if $u \in V_j$ is an interior vertex of a cycle or semicycle $C \subseteq V_i$, then the same holds true for every vertex from $V_j$. The following claim states an important property for the interior vertices in type-0 color classes. 
     
     \begin{claim} \label{claim:odd-odd-3}
        If $V_i$ is a type-$0$ color class and $C$ is a cycle or semicycle in $V_i$, then there exists an interior vertex in $C$. In particular, if $V_i \in T_0$ and $V_i \notin \{P_m^1, P_m^n, ^1\! P_n, ^m\! P_n\}$, then $\cI(V_i) \neq \emptyset.$
    \end{claim}
    \textit{Proof.} Assume first that $C$ is a cycle. Let $r$ be a corner vertex of $G$ such that $r \notin V_i$. We may suppose, without loss of generality, that $r=(1,n)$. Let $x_1$ be the minimum integer so that $C$ contains a vertex from $^{x_1}\!P_n$, and let $x_2$ be the maximum integer so that $(x_1, x_2) \in C$. Then $v=(x_1, x_2)$ is the upper leftmost vertex of $C$. Since $C$ induces a cycle and $v \neq r$, it is not a corner vertex in the grid. By the definition of the upper leftmost vertex, its neighbors $(x_1-1, x_2)$ and $(x_1, x_2+1)$ (if they exist), do not belong to $C$. Therefore, the remaining two neighbors of $v$, namely $u=(x_1, x_2-1)$ and $w=(x_1+1,x_2)$, do exist and belong to $C$. 
    
    Observe that $v \notin X_1$. Indeed, $v \in X_1$ would imply that one of $u$ and $w$ belonged to $X_2$. Since $V_i \in T_0$, it is not possible, and therefore $v \in X_0$.  
    
    Consider now the vertex $z=(x_1+1, x_2-1)$. Since $v \in X_0$, we observe $z \in X_2$. Then $z$ belongs to a color class $V_j$ of type-$1$. By the definition of $x_1$ and $x_2$, there is no path from $z$ to $(1,n)$ that avoids the vertices in $C$, and therefore $z$ is an interior vertex of $C$. It also follows that $z \in \cI(V_i)$ and therefore, $V_j \subseteq \cI(V_i)$.

    If $C$ is a semicycle, then the statement is proved along the same lines. However, in this case, we suppose that $V_i$ contains the corner vertices $(m,1)$ and $(m,n)$ when choosing the corner vertex $(1,n)$ and defining $x_1$ and $x_2$ to get the upper leftmost vertex $v$ of $C$. 
    \medskip
    
    Recall that every color class, which is different from $P_m^1$, $P_m^n$, $^1\! P_n$, and $^m\! P_n$, contains a cycle or a semicycle. Thus, the second statement directly follows. 
    \smallqed
    \medskip

    If $\varphi$ is a $\mc$-coloring of $G$ with color classes $V_1, \dots, V_t$, let
    $$R(\varphi)=\sum_{V_i \in T_0} |\cI(V_i)|
    $$ that is the same as the number of pairs $(V_i, u)$ where $V_i \in T_0$ and $u$ is an interior vertex of the color class $V_i$.
    In the proof, we choose a $\mc$-coloring $\varphi$ such that $R(\varphi)$ is minimum over all $\mc$-colorings of $G$. 
     \begin{claim} \label{claim:odd-odd-4}
       $ R(\varphi)=0  $.
    \end{claim}
    \textit{Proof.} 
    Suppose, to the contrary, that $R(\varphi) > 0$; that is a color class $V_i \in T_0$ exists with an interior vertex. It also follows that $V_i$ contains a cycle or a semicycle.

    First, remove all vertices from $V_i$ that do not belong to any cycle or semicycle in $G[V_i]$. This way we obtain the set $V_i^c$ of cycle-vertices in $V_i$. While $G[V_i]$ is connected, the graph $G[V_i^c]$ may contain several components. Since there are no two opposite corner vertices in $V_i$, we may assume, without loss of generality, that     
    the corner vertices $(1,n)$ and $(1,1)$ are not in $V_i$.  Determine the upper leftmost vertex $v=(x_1, x_2) \in V_i^c$ as in the proof of Claim~\ref{claim:odd-odd-3}. Hence $v$ belongs to a cycle or semicycle $C \subseteq V_i^c$. 

    Assume first that $C$ is a cycle, and let $u=(x_1, x_2-1)$, $w=(x_1+1, x_2)$, $z=(x_1+1, x_2-1)$. Recall from the proof of Claim~\ref{claim:odd-odd-3} that $v \in X_0$, $u,w \in X_1$, and $z \in X_2$; that is $x_1$ and $x_2$ are odd integers. Moreover, $\{v,u,w\} \subseteq V_i $, and $z \in V_j$  where $V_j \subseteq \cI(V_i)$. Consequently, $V_j$ contains no corner vertex from the grid and, by the majority condition~\eqref{eq:majority-1},  $\delta(G[V_j]) \ge 2$. It follows that $z$ has at least two neighbors in $V_j$ and, since $u,w \notin V_j$, the vertices $z_1=(x_1+1, x_2 -2)$ and $z_2=(x_1+2, x_2-1)$ exist and are included in $V_j$. Since $C$ is a cycle, both $u$ and $w$ have a neighbor in $C$ that is different from $v$. As $V_i$ is of type-0, these neighbors do not belong to $X_2$. Consequently, the vertices $u_1=(x_1, x_2-2)$ and $w_1=(x_1+2, x_2)$ are on the cycle $C$.
    \medskip

    To get a contradiction, we define new color classes and show that $R(\varphi') < R(\varphi)$ holds for the corresponding new $\mc$-coloring $\varphi'$ of $G$. First, let 
    $$V_j^*=\{v,u,w,z\} \text{ and } V_i^*= (C \setminus \{v,u,w\}) \cup (V_j \setminus \{z\}).$$
    By the earlier observations, $\delta(G[V_j^*])=\delta(G[V_i^*])= 2$.
    
    Now, we consider the vertices in $V_i^c \setminus C$ and the incident edges in $G[V_i^c]$. By the choice of $v$ being the upper leftmost vertex in $V_i^c$, and since $V_i \cap X_2 =\emptyset$ implies that $u$ and $w$ have no neighbors from $V_i^c \setminus C$, no such edge is incident to the vertices in $V_j^*$. However, some of these edges might be incident to the vertices in $V_i^*$. Then, we extend $V_i^*$ to $V_i^{**}$ by adding all vertices from $V_i^c \setminus C$ that can be reached from $C \setminus \{v,u,w\}$  via a path inside $G[V_i^c]$. Observe that $V_i^{**}$ and $V_j^*$ remain disjoint sets; $V_j^*$ induces a $4$-cycle; and every vertex in $V_i^{**} \setminus V_j$ belongs to a cycle or semicycle in $G[V_i^{**}]$. 

    By definition, $V_j \subseteq V_j^* \cup V_i^{**}$. At this point, however, $V_j^* \cup V_i^{**}$ might not contain all vertices from $V_i$. The remaining vertices are non-cycle vertices in $V_i$ and vertices from components in $G[V_i^c]$ which are connected via non-cycle vertices to other components of $G[V_i^c]$. In the final part of the construction, we consider these remaining vertices in $Y=V_i \setminus (V_j^* \cup V_i^{**})$ and the incident edges inside $G[V_i]$. Recall that no such edge is incident to $u$ and $w$. If a vertex $y \in Y$ can be reached from $v$ (resp.\ from $V_i^{**}\cap V_i$) via a path which goes inside $Y$, we then attach $y$ to $V_j^*$ (resp. to $V_i^{**}$). This way, we obtain the sets $V_j'$ and $V_i'$. 
    
   The cycle $C \subseteq V_i$ contains all three vertices from $V_j^* \cap V_i=\{v,u,w\}$. By construction, every vertex in $V_j'\setminus V_j^*$ (resp.\ in $V_i' \setminus V_i^{**}$) that is adjacent to a vertex in $V_j^*$ (resp.\ in $V_i^{**}$) is a non-cycle vertex in $G[V_i]$. This ensures that no vertex $y \in Y$ belongs to both $V_j'$ and $V_i'$ and therefore, $V_j' \cap V_i'=\emptyset$. Since $G[V_i]$ is connected, every vertex from $Y$ is assigned to one of the new color classes, and we have $V_j' \cup V_i'= V_j \cup V_i$. We conclude that if $V_i$ and $V_j$ are replaced by $V_i'$ and $V_j'$ in the partition $\cP=\{V_1, \dots, V_t\}$, we obtain another partition of $V(G)$, which we denote by  $\cP'$.

    Next, we show that $\cP'$ corresponds to a $\mc$-coloring of $G$. By definition, in $V_j^*$ and $V_i^*$, every vertex has at least two neighbors in the same color class. The vertex sets $V_j'$ and $V_i'$ were obtained from $V_j^*$ and $V_i^*$, respectively, by consecutively attaching new paths and cycles to their vertices. Therefore, in $G[V_j']$ and $G[V_i']$, every vertex has at least two neighbors (with the possible exception of one or two corner vertices of the grid). Hence $\cP'$ defines a $\cg$-coloring, which we denote by $\varphi'$. As $|\cP'|=|\cP| =t$, it is a $\mc$-coloring. 
    \medskip

    Finally, we prove that $R(\varphi') < R(\varphi)$, which gives the desired contradiction. The color classes different from $V_j$ and $V_i$ were not changed during the construction of $\cP'$. Thus, $|\cI(V_\ell)|$ is the same in $\varphi $ and in $\varphi'$ when $j \neq \ell \neq i$. Since $z \in V_j'$, it is a type-1 class in the new coloring and therefore, $|\cI(V_j')|$ is not counted in $R(\varphi')$. We also note that, by the choice of vertex $v$, it is not an interior vertex of $V_i'$. Then, the same is true for every vertex from $V_j'$.

    The other new color class $V_i'$ satisfies $V_i' \subseteq V_i \cup V_j$. Since $V_j$ is an interior color class of $V_i$, and the relation is transitive, we derive that
    $\cI(V_i') \subseteq \cI(V_i)$. In fact, vertex $z$ is in $\cI(V_i)$ but not in $\cI(V_i')$. Consequently, $R(\varphi') \leq R(\varphi)-1 $. As it contradicts the choice of $\varphi$, we infer that $C$ is not a cycle.

    Assuming that $C$ is a semicycle in $V_i$, the same construction and argument provide the same conclusion $R(\varphi') \leq R(\varphi)-1 $.
    This contradiction proves $R(\varphi) =0$. \smallqed
    \medskip
    
    Claims~\ref{claim:odd-odd-3} and \ref{claim:odd-odd-4} show that $\varphi$ contains no color class of type-$0$ being different from the border layers $P_m^1, P_m^n, ^1\! P_n$, and $^m\! P_n$. Therefore, $|T_0| \leq 2$.
    
    If $|T_0| \leq 1$, then Claim~\ref{claim:odd-odd-2} directly implies $ \mc(G) \leq 1 + \lfloor\frac{m}{2}\rfloor \lfloor\frac{n}{2}\rfloor$ as stated. It is therefore sufficient to consider the $\mc$-colorings of $P_m \cp P_n$ with $|T_0|=2$. We first prove the upper bound for $P_3 \cp P_n$, then use induction on $m+n$ to settle the general statement.
    
    \begin{claim} \label{claim:grid-P-3}
        It holds for every odd integer $n \ge 3$ that $\mc(P_3 \cp P_n) \leq 1+ \frac{n-1}{2}$.
    \end{claim}
     \textit{Proof.}
    We already know that the upper bound holds if $|T_0|\le 1$ (it includes the case of $n=3$), and that $|T_0|=2$ implies that $T_0=\{^1\!P_n, ^3\!P_n\} $ or $T_0=\{P_3^1, P_3^n\} $. For a grid $P_3 \cp P_n$, the first case is impossible as it would violate the majority condition~\eqref{eq:majority-1} for the border vertex $(2,1)$. Hence, $T_0=\{P_3^1, P_3^n\}$. Consider a color class $V_i \in T_1$.  Since the four corner vertices belong to the specified type-0 color classes, $V_i$ includes no corner vertex. Therefore, $\delta(G[V_i])\ge 2$ and $G[V_i]$ contains a cycle. Every cycle in $P_3 \cp P_n$ includes at least two vertices from the middle layer $^2\!P_n$. Let $Z=\, ^2\!P_n \setminus \{(2,1), (2,n)\}$. We infer that $|V_i \cap Z| \ge 2$. Since it holds for every color class of type-1, and since $|Z| = n-2$,
    $$ \mc(P_3 \cp P_n) =|T_0|+|T_1| \leq 2+ \left\lfloor\frac{n-2}{2} \right\rfloor= 1+ \left\lfloor\frac{n}{2} \right\rfloor
    $$
    as stated. \smallqed
        
    Now our attention turns back to the general case of $P_m \cp P_n$ to finish the proof of the theorem. Recall that it is enough to consider the $\mc$-colorings with $|T_0|=2$ and that we may assume, without loss of generality, that  $T_0= \{P_m^1, P_m^n\}.$
    
    Deleting the two layers $P_m^1$ and $P_m^n$ from $G=P_m \cp P_n$, the color classes in $T_1$ provide a $\cg$-coloring for the grid $P_m \cp P_{n-2}$ with $\mc(G)-2$ colors.      
     In view of Claim~\ref{claim:grid-P-3}, we may proceed by induction on the sum $m+n$ by assuming $m \ge 5$ and $n \ge 5$.
    \begin{align*}
        \mc(P_m \cp P_{n})-2 &\leq \mc(P_m \cp P_{n-2})\\ &\leq 1 + \Bigl\lfloor\frac{m}{2}\Bigr\rfloor \Bigl\lfloor\frac{n-2}{2} \Bigr\rfloor\\
        &= \Bigl\lfloor\frac{m}{2}\Bigr\rfloor \Bigl\lfloor\frac{n}{2} \Bigr\rfloor -  \Bigl\lfloor\frac{m}{2}\Bigr\rfloor +1\\
        &\leq \Bigl\lfloor\frac{m}{2}\Bigr\rfloor \Bigl\lfloor\frac{n}{2} \Bigr\rfloor -1.
    \end{align*}
     This implies the required result for all pairs of odd integers $m\ge 5$ and $n \ge 5$, completing the proof of the theorem. 
   \end{proof}



\section{Concluding remarks} \label{sec:concluding}
This paper is the first step in the study of the majority C-chromatic number of Cartesian product graphs. Therefore, many questions remain open, for example, investigating the behavior of $\mc(G \cp H)$ over specific graph classes. Here, we discuss open problems closely related to the topics of this paper.
\medskip

Proposition~\ref{prop:cart}(i) establishes a basic lower bound on $\mc(G \cp H)$. Its sharpness is pointed out in Remark~\ref{rmk:cart-tightness}. However, a complete characterization of such graph pairs attaining equality in Proposition~\ref{prop:cart}(i) is not provided in this work, so we propose it as an open problem.  

\begin{open}
    Characterize the graph pairs $(G,H)$ that satisfy the equality $\mc(G\cp H) = \mc(G)\, \mc(H)$.
\end{open}

Theorem~\ref{thm:2-dim-Ham} determines the $\cg$-chromatic number of two-dimensional Hamming graphs as $\mc(K_m \cp K_n) = \min\{m,n\}$. A natural question is to ask for formulas for Hamming graphs of higher dimension. As shown by Theorem~\ref{thm:Ham-power-odd}, the following lower bounds do not always hold with equality. However, the bound in (ii) gives the exact value of $\mc(G)$ if $G= K_n^{\cp, k}$ and $k$ is even, or if $n=2$. For other cases, the result
may provide a stepping stone to discover the exact values.   
\begin{proposition} \label{prop:imbalanced}
    Let $G= K_{n_1} \cp \cdots \cp K_{n_d}$ be a $d$-dimensional Hamming graph such that $1 \leq n_1 \leq \dots \leq n_d$ holds for the orders of the factors.
    \begin{itemize}
        \item[(i)] If $r \leq d-1$ is an integer such that $\sum_{i=1}^{r}(n_i-1) \leq \sum_{i=r+1}^{d}(n_i-1)$, then
        $$ \mc(G) \ge \prod_{i=1}^{r}n_i.
        $$
         \item[(ii)] It holds that
         $$ \mc(G) \ge \prod_{i=1}^{\lfloor d/2\rfloor}n_i.
        $$         
    \end{itemize}
\end{proposition}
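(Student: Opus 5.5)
For part (i), I will imitate the coloring used in the proof of Theorem~\ref{thm:Ham-even}, with the colors read off from the coordinates of the smallest factors. Represent the vertices of $G$ by vectors $(x_1,\dots,x_d)$ with $x_i\in[n_i]$, and define
$$\varphi((x_1,\dots,x_d))=(x_1,\dots,x_r).$$
This map is onto $[n_1]\times\cdots\times[n_r]$, so it uses $\prod_{i=1}^r n_i$ colors. Equivalently, I will view $G$ as $G_1\cp G_2$ with $G_1=K_{n_1}\cp\cdots\cp K_{n_r}$ and $G_2=K_{n_{r+1}}\cp\cdots\cp K_{n_d}$, and take as color classes the $G_2$-layers. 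This is exactly the partition used in Proposition~\ref{prop:cart}(ii) with the roles of the factors chosen so that there are $|V(G_1)|$ classes.

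Then I check the majority condition. A vertex $z$ has degree $\sum_{i=1}^d(n_i-1)$. Its neighbors of the same color are precisely those obtained by changing one of the coordinates $x_{r+1},\dots,x_d$, so there are $\sum_{i=r+1}^d(n_i-1)$ of them. The hypothesis $\sum_{i=1}^r(n_i-1)\le\sum_{i=r+1}^d(n_i-1)$ says this is at least half of $\deg_G(z)$, which gives (i). Alternatively, one can invoke Proposition~\ref{prop:cart}(ii) directly. Since $G_1$ and $G_2$ are regular, $\delta(G_2)=\sum_{i>r}(n_i-1)\ge\Delta(G_1)$, so $\mc(G_2\cp G_1)\ge |V(G_1)|=\prod_{i\le r}n_i$.

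For part (ii), I will apply (i) with $r=\lfloor d/2\rfloor$. If $d=1$, then $r=0$: the empty product equals $1$ and the claim is trivial. Otherwise $r\le d-1$. The only point to verify is the inequality
$$\sum_{i=1}^{r}(n_i-1)\le\sum_{i=r+1}^{d}(n_i-1).$$
Because the orders are sorted, the right-hand sum has $d-r\ge r$ terms, and I will pair $n_i$ with $n_{d-r+i}\ge n_i$ for $i\in[r]$. The leftover terms on the right are nonnegative, so the inequality holds.

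No step here is a real obstacle. The only care needed is to handle the degenerate cases $r=0$ and factors $K_1$ consistently, both of which contribute trivially.
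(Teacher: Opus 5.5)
Your proposal is correct and follows the paper's proof essentially verbatim. Part (i) uses the same coloring $\varphi(x)=(x_1,\dots,x_r)$ with the same count of $\sum_{i=r+1}^{d}(n_i-1)$ same-colored neighbors, and part (ii) applies (i) with $r=\lfloor d/2\rfloor$, justified by the monotonicity of the $n_i$. Your side remark that (i) also follows at once from Proposition~\ref{prop:cart}(ii) applied to $G_2 \cp G_1$ is a valid shortcut that the paper does not spell out.
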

\begin{proof}
   (i) Let $G$ be the $d$-dimensional Hamming graph $K_{n_1} \cp \dots \cp K_{n_d}$ with $V(G)=[n_1] \times \dots \times [n_d]$. 
   Assume that $r$ satisfies the condition in the statement. We define the following vertex coloring of $G$, where the colors are represented by $r$-dimensional vectors from the set $A=[n_1]\times \dots \times [n_r]$. For every vertex $x=(x_1, \dots , x_d)$ from $G$, let
   $$ \varphi(x)= (x_1, \dots, x_{r}).
   $$  
   We first prove that $\phi$ is a $\cg$-coloring of $G$. Let $x=(x_1, \dots , x_d)$ be an arbitrary vertex from $V(G)$. It shares its color $(x_1, \dots , x_r)$ with every neighbor $z=(z_1, \dots , z_d)$ if their coordinates differ in only one place such that $x_\ell \neq z_\ell$ and $r+1 \leq \ell \leq d$. Therefore, the color class of $x$ contains exactly $\sum_{i=r+1}^{d}(n_i-1)$ vertices from $N_G(x)$. Since $\deg_G(x)= \sum_{i=1}^{d}(n_i-1)$, and the condition ensures $\sum_{i=1}^{r}(n_i-1) \leq \sum_{i=r+1}^{d}(n_i-1)$, the coloring $\varphi$ satisfies the majority condition~\eqref{eq:majority-1} for $x$. As it is true for every vertex of $G$, $\varphi$ is a $\cg$-coloring of $G$. 
   
   It is clear that $\varphi$ uses $|A|=\prod_{i=1}^{r}n_i$ colors. Therefore, we may conclude that
   $$ \mc(G) \ge \prod_{i=1}^{r}n_i.
        $$
   (ii) The statement in (ii) is a consequence of (i). Indeed, by the nondecreasing order $1\le n_1 \leq \dots \leq n_k$,
   \begin{align*}
       \sum_{i=1}^{\lfloor d/2 \rfloor}(n_i-1) &= 
       \Bigl(\sum_{i=1}^{\lfloor d/2 \rfloor}n_i\Bigr) - \Bigl\lfloor \frac{d}{2} \Bigr\rfloor
       \leq \Bigl(\sum_{i=\lfloor d/2 \rfloor +1}^{d}n_i\Bigr) - \Bigl\lceil \frac{d}{2} \Bigr\rceil\\
       &= \sum_{i=\lfloor d/2 \rfloor +1}^{d}(n_i-1).
   \end{align*} 
 The above inequality clearly holds if $d$ is even, and $1 \le n_d$ verifies it for the case when $d$ is odd. Hence the condition in (i) is satisfied if $r=\lfloor d/2 \rfloor$, and (i) directly implies the lower bound in (ii).  
\end{proof}
As the lower bounds in Proposition~\ref{prop:imbalanced} do not always give the exact values, and since the lower bound in Theorem~\ref{thm:Ham-power-odd} also allows improvement (see Remark~\ref{rmk:thm7}), we pose the following open problems.

\begin{open}
    Determine the $\cg$-chromatic number of three- and four-dimensional (imbalanced) Hamming graphs.
\end{open}
\begin{open}
    Determine $\mc(K_n^{\cp, k})$ if $k\ge 3$ is odd and $n \ge 3$.
\end{open}
\medskip

Theorems~\ref{thm:grid-even-odd} and~\ref{thm:grid-odd-odd} together with Proposition~\ref{prop:grid-1} determine $\mc(P_m \cp P_n)$ for every $m$ and $n$. Along the way, we obtained formulas for cylinders and toruses under some parity conditions. By Proposition~\ref{prop:grid-1},
 $\mc(C_m \cp P_n) = \mc(C_m \cp C_n)= \frac{mn}{4}$ holds for every $m$ and $n$ if both are even. By Theorem~\ref{thm:grid-even-odd}, the formula in~\eqref{eq:grid-thm-odd-even-1} holds for the cylinder $C_m \cp C_n$ if $m$ is even and $n$ is odd. We conjecture that this formula can be generalized for all cases when at least one of $m$ and $n$ is odd.
 \begin{conjecture}
     If $m$ and $n$ are positive integers such that at least one of them is odd, and $m \ge 6$, $n \ge 6$, then
     $$ \mc(C_m \cp P_n) = \mc (C_m \cp C_n)=  1 + \Bigl\lfloor\frac{m}{2}\Bigr\rfloor\Bigl\lfloor\frac{n}{2}\Bigr\rfloor.$$
 \end{conjecture}
Note that, if at least one of $m$ and $n$ is odd, a $\cg$-coloring of $C_m \cp P_n$ or $C_m \cp C_n$ with $1 + \lfloor\frac{m}{2}\rfloor \lfloor\frac{n}{2}\rfloor$ colors can be obtained by taking $\lfloor\frac{m}{2}\rfloor \lfloor\frac{n}{2}\rfloor$ small color classes, each of order four, and one large color class which corresponds to a layer or the union of two layers.

\section*{Acknowledgements}
Cs.\ Bujt\'as was supported by the Slovenian Research and Innovation Agency (ARIS) under the grants P1-0297, N1-0355, and J1-70045. 
M.\ Dettlaff and H.~Furma\'nczyk were supported by the European Commission's Horizon Europe Research and Innovation programme through the Marie Skłodowska-Curie Actions Staff Exchanges (MSCA-SE) under Grant Agreement no.101182819 (COVER: (C)ombinatorial (O)ptimization for (V)ersatile Applications to (E)mer\-ging u(R)ban Problems).


\end{document}